\newtheorem{thm}{Theorem}[section]
\newtheorem{cor}[thm]{Corollary}
\newtheorem{lem}[thm]{Lemma}
\newtheorem{prop}[thm]{Proposition}
\newtheorem*{thm*}{Theorem}
\newtheorem*{thma}{Theorem A}
\newtheorem*{thmb}{Theorem B}
\theoremstyle{definition}
\newtheorem{defn}[thm]{Definition}
\newtheorem{rem}[thm]{Remark}
\newtheorem{exa}[thm]{Example}
\begin{document}

\title{Log canonical thresholds on group compactifications}

\author[T. Delcroix]{Thibaut Delcroix}
\email{thibaut.delcroix@ujf-grenoble.fr}
\address{Univ. Grenoble Alpes, IF, F-38000 Grenoble, France \\ 
CNRS, IF, F-38000 Grenoble, France}


\begin{abstract}
We compute the log canonical thresholds of non-negatively curved singular hermitian metrics on ample linearized line bundles 
on bi-equivariant group compactifications of complex reductive groups. To this end, we associate to any such metric a convex 
function whose asymptotic behavior determines the log canonical threshold. As a consequence we obtain a formula for the 
alpha invariant of these line bundles, in terms of the polytope associated to the group compactification.
\end{abstract}

\maketitle

\section*{Introduction}

The aim of this article is to begin the study of Kähler metrics on polarized 
$G\times G$-equivariant compactifications of a connected complex reductive group $G$. 
This class of manifolds generalizes the well known class of polarized toric manifolds, 
and we extend some techniques of toric geometry to this setting.
In this article, we study singular hermitian metrics on linearized ample line bundles,
which are non-negatively curved and invariant under the action of $K\times K$, where 
$K$ is a maximal compact subgroup of $G$. We associate to such a metric a convex 
function on some real vector space, that we call the convex potential of the metric, 
and show how the asymptotic behavior of this function is controlled by a polytope 
associated to the line bundle. This generalizes the case of toric manifolds and relies 
on the $KAK$ decomposition of a reductive group.

The correspondence between metrics and their convex potentials is a bijection and 
provides a description of the set of non-negatively curved, $K\times K$-invariant, 
singular hermitian metrics. Furthermore, to obtain this description we use a  
special continuous reference metric that generalizes the Batyrev-Tschinkel metric 
on toric line bundles, which had already been used as a model for the behavior of 
continuous metrics in \cite{CLT10}.

We then proceed to compute the log canonical threshold of such metrics, in terms of 
the asymptotic behavior of their convex potential. To achieve that goal, we associate 
a convex body to the metric, that we call the Newton body of the metric, 
which gives another way to encode the asymptotic behavior 
of the convex potential, and is well suited to fan decompositions. We should stress at this 
point that another important ingredient is the existence of a toric subvariety (and 
corresponding fan) in any group compactification, that contains the information about 
the compactification. We obtain the following theorem.

\begin{thma}
Let $(X,L)$ be a polarized $G\times G$-equivariant compactification of $G$.
Assume furthermore that $X$ is Fano.
Denote by $P$ the polytope associated to $L$ and by $Q$ the polytope associated 
to the anticanonical line bundle $-K_X$. Let also $H$ denote the convex hull of the 
images, by the Weyl group $W$ of $G$, of the sum of the positive roots of $G$.
Let $h$ be a $K\times K$-invariant hermitian metric with non negative curvature on $L$, 
then the log canonical threshold of $h$ is given by:
\[
\mathrm{lct}(h)=\mathrm{sup}\{c>0; 2H+2cP \subset cN(h)+2Q\},
\]
where $N(h)$ is the Newton body of $h$.
\end{thma}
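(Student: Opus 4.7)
The plan is to reduce the computation of $\mathrm{lct}(h)$ to an integrability problem for a convex function on the Cartan subalgebra $\mathfrak{a}$, and then to convert that problem into the polytope inclusion using the fan of the toric subvariety inside $X$. Writing $h = h_0 e^{-2\psi}$, where $h_0$ is the reference continuous metric discussed in the introduction, the invariant $\mathrm{lct}(h)$ coincides with the supremum of $c>0$ such that $e^{-2c\psi}$ is locally integrable on $X$ against a smooth volume form. Because $\psi$ is $K\times K$-invariant, integrability can be tested in $K\times K$-invariant charts, and the $KAK$ integration formula reduces the question to integrability of a $W$-invariant function on $\mathfrak{a}$ against the measure
\[
\prod_{\alpha \in \Phi^+} \sinh^2(\alpha(a))\,da.
\]

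Only the asymptotic behaviour matters. On the positive Weyl chamber, $\sinh(\alpha(a))\sim \tfrac12 e^{\alpha(a)}$, so the logarithm of the Jacobian is asymptotic to $2\langle 2\rho,a\rangle$; after symmetrizing under the Weyl group this produces precisely the support function of $2H$, since $H$ is by definition the convex hull of the Weyl orbit of $2\rho = \sum_{\alpha>0}\alpha$. The convex potential associated to $\psi$ is, by the correspondence recalled in the introduction, asymptotic to the support function of the Newton body $N(h)$, while the reference metric $h_0$ contributes the support function of $2P$. Incorporating finally the discrepancies of the toric boundary divisors (which produce the support function of $2Q$, in direct analogy with the classical toric LCT calculation), integrability along a ray in direction $v \in \mathfrak{a}$ becomes equivalent to
\[
\sigma_{2H+2cP}(v) \le \sigma_{cN(h)+2Q}(v),
\]
where $\sigma_E$ denotes the support function of the convex body $E$.

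To turn this pointwise inequality into a global polytope inclusion, I would invoke the fact recalled in the introduction that every $G\times G$-equivariant compactification contains a toric subvariety whose fan controls the asymptotic directions in $\mathfrak{a}$. Local integrability at each torus-fixed point of this toric subvariety amounts to the support function inequality above holding throughout the corresponding dual cone, and because the fan is complete these cones exhaust $\mathfrak{a}$. The inequality then holds for every $v$, which is equivalent to the polytope inclusion $2H+2cP \subset cN(h)+2Q$; taking the supremum over admissible $c$ yields the claimed formula. The main obstacle will be the rigorous asymptotic analysis tying these pieces together: one must verify that directions transverse to the $K\times K$-orbits contribute only bounded error terms, and that the Newton body encodes the asymptotics of $\psi$ uniformly over each maximal cone, so that the ray-by-ray integrability tests patch cleanly into a single polytope inclusion. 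Convexity and $W$-invariance of the convex potential should guarantee that this aggregation loses no combinatorial information.
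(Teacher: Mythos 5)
Your two opening reductions coincide with the paper's (reduce $\mathrm{lct}(h)$ to finiteness of a global integral $\int_X e^{-c\psi}dV$ against a locally bounded reference metric, then pass to $\mathfrak{a}^+$ by the $KAK$ formula), but the step that is supposed to produce the polytope inclusion rests on a claim that is false in general: that the convex potential of $h$ is, up to bounded error, the support function of $N(h)$. The Newton body only records which linear forms minorize the potential up to a constant; the potential can lie an \emph{unbounded} (sublinear) distance below the support function of $N(h)$. Already in rank one, $u(x)=a|x|-\sqrt{1+|x|}$ is convex, satisfies $u\leq \sigma_{2P}+C$ for $2P=[-a,a]$, has Newton body $(-a,a)$, yet $u-a|x|$ is unbounded below; so one cannot replace $u$ by $\sigma_{N(h)}$ with $O(1)$ error and reduce to a support-function inequality. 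Two further analytic points in your reduction are unjustified: integrability of $e^{-f}$ along every ray of a cone does not imply integrability over the full-dimensional cone, and a support-function inequality only yields an inclusion into the \emph{closure} of $cN(h)+2Q$, which is exactly the open/closed distinction a correct criterion must control (it is ultimately harmless because of the supremum over $c$, but that requires an argument, not an identification).

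The paper's proof supplies precisely the machinery your sketch is missing. The Newton body enters not through asymptotic equivalence but through an integrability criterion stated directly in its terms: Guenancia's analytic Howald-type result ($e^{-f}$ is integrable on translates of a smooth polyhedral cone iff $0\in\mathrm{Int}(N_{\sigma}(f))$, for $f$ convex), extended to the measure $J(x)dx$ by sandwiching $J$ between constant multiples of $e^{4\rho}$ on translated cones; this, together with $W$-invariance at the very end, is where $2H$ comes from, rather than a ``symmetrized asymptotic of the Jacobian''. Second, the exponent actually appearing, $c(u-u_0)+u_1$, is \emph{not} convex because of $-cu_0$; the fan of the toric subvariety is used to decompose $\mathfrak{a}^+$ into cones on which $u_0$ is linear, after which the gluing property $N_{\sigma_1\cup\sigma_2}(f)=N_{\sigma_1}(f)\cap N_{\sigma_2}(f)$ applies, and the key identity $N_{\mathfrak{a}}(cu+u_1)=cN(h)+2Q$ is a genuine theorem about convex functions (Legendre transform and infimal convolution, via Rockafellar), not a formal addition of asymptotics. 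Finally, $2Q$ arises as the Newton body of a locally bounded positively curved metric on $-K_X$ chosen to define the volume form, not from ``discrepancies of the toric boundary divisors'', and no localization at torus-fixed points of $Z$ is needed or justified: nothing in your argument controls the local structure of $X$ along the non-toric boundary orbits, and the paper deliberately avoids this by working with the single global integral over the open orbit.
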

 
Using this expression of the log canonical threshold, we are able to compute Tian's $\alpha$
invariant for any ample linearized line bundle on a Fano group compactification, with 
respect to the $K\times K$ action.

\begin{thmb}
Let $(X,L)$ be a polarized compactification of $G$, and $P:=P(X,L)$.
Assume furthermore that $X$ is Fano and let $Q:=P(X,-K_X)$.
Then 
\[
\alpha_{K\times K}(L)=\mathrm{sup} \{ c>0 ; c(P + (-P^W)) \subset Q \ominus H \},
\]
where $P^W$ denotes the subset of $W$-invariant points of $P$ and $W$ is the 
Weyl group of $G$.
\end{thmb}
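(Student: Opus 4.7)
The plan is to apply the definition $\alpha_{K\times K}(L)=\inf_h \mathrm{lct}(h)$, where the infimum ranges over $K\times K$-invariant non-negatively curved singular hermitian metrics $h$ on $L$, and then use Theorem~A to reduce the computation to a problem about Newton bodies. Thanks to the bijection between such metrics and $W$-invariant convex potentials established earlier in the paper, together with the description of the Newton body as an encoding of the asymptotic slopes of the potential, we have: (i) every $N(h)$ is a $W$-invariant convex subset of $2P$, and (ii) for every $p_0\in P^W$ the linear function $u(x)=2\langle p_0,x\rangle$ is convex and $W$-invariant (the latter because $p_0$ is Weyl-fixed), hence is the convex potential of some admissible metric $h_{p_0}$, whose Newton body is the singleton $\{2p_0\}$.

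For the upper bound on $\alpha_{K\times K}(L)$, apply Theorem~A to each $h_{p_0}$:
\[
\mathrm{lct}(h_{p_0})=\mathrm{sup}\{c>0:2H+2cP\subset 2cp_0+2Q\}=\mathrm{sup}\{c>0:c(P-p_0)\subset Q\ominus H\}.
\]
For fixed $p_0$ the set of $c$ satisfying the containment is a closed interval containing $0$: each element of $c(P-p_0)+H$ is affine in $c$ and $Q$ is convex, so a convex combination of two valid values of $c$ is again valid. This intervality lets one commute the infimum over $p_0\in P^W$ with the supremum in $c$, giving
\[
\inf_{p_0\in P^W}\mathrm{lct}(h_{p_0})=\mathrm{sup}\{c>0:\forall p_0\in P^W,\ c(P-p_0)\subset Q\ominus H\}=\mathrm{sup}\{c>0:c(P+(-P^W))\subset Q\ominus H\}.
\]
Since each $h_{p_0}$ is an admissible metric, this yields $\alpha_{K\times K}(L)\le\mathrm{sup}\{c>0:c(P+(-P^W))\subset Q\ominus H\}$.

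For the matching lower bound, take any admissible $h$ and set $N:=N(h)\subset 2P$. Since $N$ is $W$-invariant and convex, the fixed locus $N^W=N\cap\mathrm{Fix}(W)$ is non-empty (the barycenter of $N$ is Weyl-fixed) and sits inside $2P^W$, so one may choose $p_0\in P^W$ with $2p_0\in N$. Then $\{2cp_0\}\subset cN$ yields $2cp_0+2Q\subset cN+2Q$, so any $c$ satisfying $2H+2cP\subset 2cp_0+2Q$ also satisfies $2H+2cP\subset cN+2Q$. Theorem~A then gives $\mathrm{lct}(h)\ge\mathrm{lct}(h_{p_0})$, whence taking the infimum over $h$ yields $\alpha_{K\times K}(L)\ge\inf_{p_0\in P^W}\mathrm{lct}(h_{p_0})$, matching the upper bound.

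The main obstacle is the construction of the extremal metrics $h_{p_0}$ and the identification of their Newton bodies with the singletons $\{2p_0\}$, that is, verifying that the linear potential $2\langle p_0,\cdot\rangle$ lies in the image of the potential-to-metric correspondence and that its asymptotic cone reduces to a point. A secondary point to be justified from earlier material is that the Newton body of an arbitrary admissible $h$ is indeed $W$-invariant and contained in $2P$; both should follow from the earlier sections establishing the bijection and the Newton body formalism, after which the remainder of the argument is a straightforward convex-geometric manipulation.
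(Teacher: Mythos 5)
Your proposal is correct and follows essentially the same route as the paper: reduce the infimum to the metrics $h_{p_0}$ with linear potentials indexed by $W$-fixed points (via $W$-invariance and convexity of $N(h)\subset 2P$ and monotonicity of the threshold formula in the Newton body), then rewrite $\mathrm{lct}(h_{p_0})$ using Theorem~A and exchange the infimum and supremum. The points you flag as remaining obstacles are immediate from the earlier material ($2p_0\in 2P$ gives $2\langle p_0,\cdot\rangle\le v$, so the bijection of Theorem~\ref{Asbe} applies and $N(h_{p_0})=\{2p_0\}$), and your intervality argument for the inf/sup exchange is a welcome justification of a step the paper performs silently.
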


This formula generalizes the formula for the $\alpha$-invariant of polarized Fano toric 
manifolds previously obtained in \cite{Del14}, 
and independently by other authors \cite{Amb14,LSY15}. In the case of the anticanonical 
line bundle on a Fano toric manifold, the formula was initially obtained by Song \cite{Son05}.

The original motivation of this work was to obtain such an expression, hoping that Tian's 
criterion for the existence of Kähler-Einstein metrics in terms of this invariant would 
be satisfied by some group compactifications. Recall that Tian's criterion \cite{Tia87} 
is that if the 
$\alpha$ invariant is strictly greater than $\frac{n}{n+1}$ where $n$ is the dimension 
of the manifold, then there exists a Kähler-Einstein metric. 
 
For toric manifolds, additional symmetries had to be taken into account for the criterion
to be satisfied. 
For the important examples of wonderful compactifications of semisimple adjoint groups
with no rank one factor, Brion computed their automorphism group in \cite{Bri07}, so that
our result allows to compute the alpha invariant with respect to a maximal compact 
subgroup of the automorphism group, but Tian's criterion is not satisfied. 
Even though we can in some cases consider additional symmetries, we do not obtain new 
examples of Kähler-Einstein metrics by this method. 
Our study of hermitian metrics will be used in \cite{DelKE} where we obtain 
a necessary and sufficient condition for the existence of Kähler-Einstein metrics 
on a group compactification in terms 
of the polytope. The present article and \cite{DelKE} contain the main results of the 
author's PhD thesis \cite{DelTh}.

\section{Group compactifications}

\subsection{Definition and examples}

Let $G$ be a connected complex reductive group.

\begin{defn}
Let $X$ be a projective manifold. We say that $X$ is a 
\emph{smooth $G\times G$-equivariant compactification of $G$}
(or in short a \emph{compactification of $G$}) if $X$ admits a holomorphic 
$G\times G$-action with an open and dense orbit equivariantly isomorphic to $G$ as 
a $G\times G$-homogeneous space under the action defined by 
$(g_1,g_2)\cdot g= g_1gg_2^{-1}$.
\end{defn}

Let $X$ be a compactification of $G$.
We will always identify $G$ with the open and dense orbit in $X$. These manifolds 
belong to the class of \emph{spherical manifolds} \cite{Per14,Tim11}. 
There is a finite number of 
$G\times G$-orbits in $X$ and the boundary $X\setminus G$ is of codimension one.

\begin{exa}
If $G=T \simeq (\mathbb{C}^*)^n$ is a torus, then the compactifications of $T$ are the  
projective toric manifolds. One goes from the $T$-action to the $T\times T$ action 
through the morphism $T\times T \rightarrow T, (t_1,t_2)\mapsto t_1t_2^{-1}$.
\end{exa}

\begin{exa}
Assume that $G$ is an adjoint semisimple group. 
Then De Concini and Procesi \cite{DCP83} showed the existence of a special compactification
of $G$, called the wonderful compactification of $G$.
It is the only compactification of $G$ satisfying the following property : its boundary 
$X\setminus G$ (where we identify 
the open and dense orbit in $X$ with $G$) is a union of simple normal crossing prime 
divisors $D_i$, $i\in \{1,\ldots, r\}$, such that for any subset 
$I\subset \{1,\ldots, r\}$,
the intersection $X\cap \bigcap_{i\in I} D_i$ is the closure of a unique $G\times G$-orbit, and 
all $G\times G$-orbits appear this way. 
The integer $r$ is equal to the rank of $G$, which is the dimension of a maximal 
torus in $G$.

The wonderful compactification of $\mathrm{PGL}_2(\mathbb{C})$ is especially 
simple : it is $\mathbb{P}^3$ considered as $\mathbb{P}(\mathrm{Mat}_{2,2}(\mathbb{C}))$
equipped with the action of $\mathrm{PGL}_2(\mathbb{C})\times \mathrm{PGL}_2(\mathbb{C})$ 
induced by the multiplication of matrices on the left and on the right.
\end{exa}

\subsection{Polytopes associated to a polarized group compactification}

Recall that a \emph{$G$-linearized line bundle} over a $G$-manifold $X$ is a 
line bundle $L$ on $X$ equipped with an action of $G$ lifting the action on $X$, 
and such that the morphisms between the fibers induced by this action 
are linear. 

Let $G$ be a connected complex reductive group.
We call a \emph{polarized group compactification} a pair $(X,L)$ where $X$ is a 
compactification of $G$ and $L$ is a $G\times G$-linearized line bundle on $X$.

Choose $T$ a maximal torus in $G$, and let $S$ be a maximal compact torus in $T$. 
We denote, as usual, by $M$ the lattice of characters of $T$ and by $N$ the lattice 
of one parameter subgroups of $T$, naturally dual to each other. 
Denote by $\mathfrak{s}$ the Lie algebra of $S$, by $\mathfrak{t}$ the Lie algebra 
of $T$ and by $\mathfrak{a}$ the Lie algebra $i \mathfrak{s} \subset \mathfrak{t}$. 
We identify $\mathfrak{a}$ with $N\otimes \mathbb{R}$, and $\mathfrak{a}^*$ with 
$M\otimes \mathbb{R}$.

Let $\Phi\subset \mathfrak{a}^*$ denote the root system of $(G,T)$. Let $W$ be 
its Weyl group. Choose a system of positive roots $\Phi^+$. It defines a positive 
Weyl chamber $\mathfrak{a}^+$ in $\mathfrak{a}$, resp. $\mathfrak{a}^*_+$ in 
$\mathfrak{a}^*$.

\begin{thm}{\cite[Section 2]{AB04II}}  
Let $(X,L)$ be a  polarized group compactification of $G$. Denote by $Z$ the 
closure of $T$ in $X$. Then $Z$ is a toric manifold, equipped with a $W$-action, 
and $L|_Z$ is a $W$-linearized ample toric line bundle on $Z$.
\end{thm}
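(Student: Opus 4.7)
The plan is to establish each assertion of the statement in turn: (i) $Z$ carries an effective $T$-action with an open dense orbit, (ii) $Z$ is smooth, (iii) $Z$ admits a compatible $W$-action, and (iv) $L|_Z$ is a $W$-linearized ample toric line bundle.

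For (i), I would observe that under the $G\times G$-action on $X$, the subgroup $T\times T$ preserves $T\subset G\subset X$, hence preserves its closure $Z$. The stabilizer of the neutral element $1\in T$ is the diagonal $\Delta T$, so the orbit map $(t_1,t_2)\mapsto t_1t_2^{-1}$ factors through $(T\times T)/\Delta T\simeq T$ and identifies the open orbit with a single copy of $T$ acting on itself by translation. Thus $Z$ is a $T$-variety with an open dense torus orbit and an effective $T$-action, hence a toric variety once smoothness is known.

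For (iii), I would use the normalizer $N:=N_G(T)$. For any $n\in N$, the element $(n,n)\in G\times G$ acts on $G$ by $g\mapsto ngn^{-1}$, which sends $T$ into $T$; hence its action on $X$ preserves $Z$. Since $(t,t)$ acts trivially on $Z$ (as $tst^{-1}=s$ for $s,t\in T$), this action descends to the quotient $W=N/T$, giving the desired $W$-action, which by construction normalizes the $T$-action on $Z$. For (iv), the $G\times G$-linearization of $L$ restricts to a $T\times T$-linearization, hence a toric linearization of $L|_Z$; likewise it restricts to an $N\times N$-linearization whose induced action on $L|_Z$ factors through $W$ on the base, providing the $W$-linearization. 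Ampleness of $L|_Z$ is immediate from the ampleness of $L$ together with the fact that $Z$ is closed in $X$.

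The main obstacle is (ii), the smoothness of $Z$. Here I would invoke the local structure of group compactifications viewed as spherical $G\times G$-varieties. Concretely, one can cover $X$ by $B\times B^-$-stable affine open subsets (where $B$ and $B^-$ are opposite Borel subgroups containing $T$), and on each such chart the local structure theorem for spherical varieties produces a $T$-equivariant retraction onto a toric slice isomorphic to a $T$-stable affine open of $Z$. Smoothness of $X$ together with the product decomposition provided by this theorem then forces smoothness of these toric slices, and hence of $Z$. Equivalently, one may exploit the bijection between smooth toroidal $G\times G$-compactifications of $G$ and smooth $W$-invariant fans in $N\otimes\mathbb R$ (or more precisely, use that a $G\times G$-compactification is smooth if and only if the associated colored fan is smooth, and that this fan coincides with the fan of $Z$). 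Either route reduces smoothness of $Z$ to the smoothness hypothesis on $X$, completing the argument.
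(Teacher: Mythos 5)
This statement is quoted in the paper from \cite[Section 2]{AB04II} and is not proved there, so there is no internal proof to compare against; your sketch is essentially a reconstruction of the standard argument from that reference (locally closed orbit $T$ open in its closure, the $\mathrm{diag}(N_G(T))$-action inducing the $W$-action, restriction of the linearization, and the local structure theorem for the smoothness of $Z$). Parts (i) and (iii) are correct as written, and ampleness of $L|_Z$ by restriction to a closed subvariety is fine.

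Two points need more care. First, the $W$-linearization in (iv) does not come for free from restricting the $\mathrm{diag}(N_G(T))$-action: the subgroup $\mathrm{diag}(T)$ acts trivially on $Z$ but acts on the fibers of $L$ over $Z$ through the character $\chi|_T$, where $\chi$ is the character by which $\mathrm{diag}(G)$ acts on $L_e$ (the same $\chi$ used later in the paper). For reductive $G$ with positive-dimensional center (e.g.\ $\mathrm{GL}_n$) this character is generally nontrivial, so the action on the total space of $L|_Z$ does not factor through $W$; one must first twist the $G\times G$-linearization by a character of $G\times G$ killing $\chi$ (this only translates the polytope), or build the $W$-linearization after such a normalization. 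Saying the action ``factors through $W$ on the base'' does not by itself produce a linearization. Second, the smoothness step (ii) is the real content of the theorem, and your argument is at the level of invoking the machinery rather than proving it: one must justify that the slice furnished by the local structure theorem on a $B\times B^-$-stable chart is indeed a $T$-stable affine open subset of $Z$, and that such charts (together with their $W$- and $G\times G$-translates) cover $Z$; moreover your alternative route --- ``smooth if and only if the associated colored fan is smooth, and this fan coincides with the fan of $Z$'' --- is not accurate as stated for non-toroidal compactifications, where colors enter the smoothness criterion and the fan of $Z$ is only the $W$-spread of the uncolored data. As a citation-level sketch this is acceptable (the paper itself only cites Alexeev--Brion), but as a proof these two steps would have to be carried out.
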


We denote by $P(X,L)$, or $P$ for simplicity, the polytope associated to the ample 
toric line bundle $L|_Z$ by the theory of toric varieties \cite{Ful93,Oda88}. 
The polytope $P$ is a lattice polytope in $M\otimes \mathbb{R}$, and it is 
$W$-invariant. Define $P^+(X,L)=P(X,L)\cap \mathfrak{a}^*_+$. It is a polytope in $\mathfrak{a}^*$, and $P(X,L)$ is the union of the images of $P^+(X,L)$ by $W$. 

The polytope $P^+(X,L)$ encodes the structure of $G\times G$-representation of the 
space of holomorphic sections of $L$, generalizing the same property for toric line 
bundles.
\begin{prop}{\cite[Section 2.2]{AB04II}}   
Let $(X,L)$  be a polarized group compactification, then 
\[
H^0(X,L)\simeq \bigoplus \{  \mathrm{End}(V_{\alpha}) ; \alpha \in M\cap P^+(X,L)\}
\]
where $V_{\alpha}$ is an irreducible representation of $G$ with highest weight $\alpha$.
\end{prop}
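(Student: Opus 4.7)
The plan is to identify $H^0(X,L)$ as a $G\times G$-submodule of $\mathbb{C}[G]$ via restriction to the dense orbit $G$, and then to pick out which Peter--Weyl components appear by restricting further to the toric submanifold $Z$ and invoking the preceding theorem.

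Since the isotropy of $e\in G$ under the $G\times G$-action is trivial, the restriction $L|_G$ is $G\times G$-equivariantly trivial; fixing such a trivialization identifies
\[
H^0(X,L)\hookrightarrow H^0(G,L|_G)\simeq \mathbb{C}[G]
\]
as $G\times G$-modules, the injectivity coming from the density of $G$ in the irreducible variety $X$. By the Peter--Weyl theorem,
\[
\mathbb{C}[G]\simeq \bigoplus_{\alpha\in M\cap \mathfrak{a}^*_+} V_\alpha^*\otimes V_\alpha \simeq \bigoplus_{\alpha\in M\cap \mathfrak{a}^*_+}\mathrm{End}(V_\alpha),
\]
and this decomposition is multiplicity-free as a $G\times G$-module. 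Hence $H^0(X,L)$ is itself multiplicity-free, and the problem reduces to determining for which dominant weights $\alpha$ the component $\mathrm{End}(V_\alpha)$ lies inside $H^0(X,L)$.

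To detect a component, I would use its one-dimensional space of $B^-\times B$-eigenvectors, which in $\mathbb{C}[G]$ is spanned by the matrix coefficient $f_\alpha(g)=\langle v_\alpha^*,\, g\cdot v_\alpha\rangle$, where $v_\alpha$ is a highest weight vector of $V_\alpha$ and $v_\alpha^*$ a lowest weight vector of $V_\alpha^*$. Restricting to $T$ yields $f_\alpha|_T=\chi^\alpha$ up to a non-zero scalar. Thus $\mathrm{End}(V_\alpha)\subset H^0(X,L)$ if and only if $f_\alpha$ extends to a regular section of $L$ on $X$; and when it does, the preceding theorem applied to the restriction to $Z$ forces $\alpha\in P\cap M$, hence $\alpha\in P^+\cap M$ since $\alpha$ is dominant.

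The main obstacle is the converse: showing that every $\alpha\in P^+\cap M$ actually arises from a global section on $X$. My approach is to use the normality of $X$ to reduce the extension problem to checking, divisor by divisor, the orders of $f_\alpha$ along the prime components of the boundary $X\setminus G$. Since $f_\alpha$ is $B^-\times B$-semi-invariant and every boundary $G\times G$-orbit in $X$ is the $G\times G$-saturation of a $T$-orbit in the toric boundary of $Z$ (up to the $W$-action), these orders are controlled by the toric vanishing data of $L|_Z$ along its own boundary divisors. Non-negativity then translates precisely into $\alpha$ being a lattice point of $P$, giving the desired identification $H^0(X,L)\simeq \bigoplus_{\alpha\in M\cap P^+(X,L)}\mathrm{End}(V_\alpha)$.
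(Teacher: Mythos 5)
The paper itself gives no proof of this proposition: it is quoted from \cite[Section 2.2]{AB04II}, so there is no in-paper argument to compare against. Your outline does follow the standard route of that reference (Peter--Weyl, multiplicity-freeness, detection of components by $B^-\times B$-eigenvectors, reduction of the extension problem to the toric slice $Z$), but as written it has one outright false step and one essentially unproved step. The false step is the very first one: the isotropy group of $e\in G$ under the $G\times G$-action $(g_1,g_2)\cdot g=g_1gg_2^{-1}$ is not trivial, it is the diagonal $\mathrm{diag}(G)\simeq G$ --- the paper itself uses exactly this fact, noting that $\mathrm{diag}(G)$ acts on the fibre $L_e$ through a character $\chi$ of $G$. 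Consequently $L|_G$ is only $G\times\{e\}$-equivariantly trivial in general; the identification $H^0(G,L|_G)\simeq\mathbb{C}[G]$ is an isomorphism of $G\times\{e\}$-modules, and as a $G\times G$-module one gets $\bigoplus_\alpha\mathrm{Hom}(V_\alpha,V_{\alpha\otimes\chi})$ rather than $\bigoplus_\alpha\mathrm{End}(V_\alpha)$ whenever $\chi$ is nontrivial (which happens already for tori and for reductive groups with nontrivial abelianization). Multiplicity-freeness survives, but matching the labels $\alpha$ with the lattice points of $P^+(X,L)$ then requires the bookkeeping of $\chi$ against the normalization of the linearization implicit in the $W$-invariance of $P$; your argument as stated simply bypasses this.

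The second issue is that the converse direction, which you correctly identify as the main obstacle, is asserted rather than proved. That the order of vanishing of the $B^-\times B$-semi-invariant section attached to $f_\alpha$ along each $G\times G$-stable prime divisor of $X$ coincides with the order of vanishing of its restriction along the corresponding $T$-stable divisors of $Z$ is precisely the content of the proposition; it needs the structure theory of group compactifications (every $G\times G$-orbit meets $Z$, the boundary divisors of $X$ cut $Z$ in its toric boundary divisors, and a local argument showing that valuations of $B^-\times B$-eigenvectors are computed on $Z$). Saying the orders ``are controlled by the toric vanishing data'' names the desired conclusion but supplies no argument, so the proof is incomplete at exactly the point where the work lies.
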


\begin{exa}{\cite[Proposition 6.1.11]{BK05}}
The wonderful compactification $X$ of an adjoint semisimple group is Fano.
The corresponding polytope $P(X,-K_X)$ is the convex hull of the images by 
the Weyl group $W$ of the weight $2\rho + \sum_{i=1}^r\alpha_i$, where 
the $\alpha_i$ are the simple roots of $\Phi^+$ and $2\rho$ is the sum of the 
positive roots.
\end{exa}

\section{Convex potential}

In this section we introduce the convex potential of a $K\times K$-invariant, 
non-negatively curved singular hermitian metric on a polarized group 
compactification. This correspondence gives a bijection between the set of 
these metrics and the set of the $W$-invariant convex functions on $\mathfrak{a}$
which satisfy asymptotic behavior conditions.

\subsection{Singular hermitian metrics and potentials}

Let $X$ be a compactification of $G$, and $L$ a linearized ample line bundle on $X$.
Given a hermitian metric $h$ on $L$ and a local trivialization $s$ of $L$ on an open 
subset $U\subset X$, the \emph{local potential} of $h$ with respect to $s$ is the 
function $\phi$ defined on $U$ by 
\[
\phi(x)=-\ln(|s(x)|_h^2).
\]
We consider here singular hermitian metrics and only require that the potential with 
respect to any local trivialization is locally integrable. The value $+\infty$ for 
the potentials is allowed.

We say that a hermitian metric is \emph{locally bounded} if its potentials with 
respect to any trivialization on a sufficiently small open subset are bounded. 
A hermitian metric is smooth (resp. continuous) if and only if its potentials with 
respect to any local trivialization are. A continuous hermitian metric is locally 
bounded. 

Given a reference metric $h_0$, we define the \emph{global potential} $\psi$ of $h$ 
with respect to $h_0$ by 
\[
\psi(x)= -\ln \left( \frac{|y |_h^2}{|y |_{h_0}^2} \right),
\]
for  any element $y$ of the fiber $L_x$. This is a function on $X$ that can a priori 
take the values $\pm \infty$. If $s$ is a local trivialization on $U$, $\phi$ 
(resp. $\phi_0$) is the potential of $h$ (resp. $h_0$) with respect to $s$, then 
$\psi = \phi - \phi_0$ on $U$.

\subsection{The convex potential}

Let $(X,L)$ be a polarized compactification of $G$.

We identify $G$ with the open dense orbit in $X$, and first build a trivialization 
of $L$ on $G$. Choose $1_e$ a non-zero element of the fiber $L_e$ above the neutral 
element $e$ of $G$. Define the section $s$ of $L$ on $G$ by $s(g)=(g,e)\cdot 1_e$.
This section is a trivialization over $G$, equivariant under the action of 
$G\times \{e\}$, and any such trivialization is a scalar multiple of $s$. 

Denote by $\phi$ the local potential of $h$ on $G$ with respect to $s$. 

We are interested in hermitian metrics that are invariant under the action of 
$K\times K$ where $K$ is a maximal compact subgroup of $G$. We choose $K$ such that 
$K\cap T = S$. We will use the classical $KAK$ decomposition of a complex reductive 
group.

\begin{prop}{\cite[Theorem 7.39]{Kna02}}
Any element $g\in G$ can be written in the form $g=k_1 \exp(x) k_2$ where 
$k_1, k_2\in K$ and $x\in \mathfrak{a}^+$. Furthermore, $x$ is uniquely determined 
by $g$. In other words, the set $A^+:= \{\exp(x) ; x\in \mathfrak{a}^+\}$ is a 
fundamental domain for the $K\times K$-action on $G$.
\end{prop}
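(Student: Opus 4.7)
The plan is to deduce the $KAK$ decomposition from the Cartan (polar) decomposition of the complex reductive group $G$, combined with the standard facts about adjoint orbits on the symmetric space side. Recall that the Cartan decomposition asserts that the multiplication map $K \times \mathfrak{p} \to G$, $(k,Y) \mapsto k\exp(Y)$, is a diffeomorphism, where $\mathfrak{p} := i\mathfrak{k}$ and $\mathfrak{k}$ is the Lie algebra of $K$. Under the identifications of the excerpt, $\mathfrak{a} = i\mathfrak{s}$ sits inside $\mathfrak{p}$ as a maximal abelian subspace, because $S$ is a maximal torus in $K$.

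For existence, I would first write $g = k \exp(Y)$ with $k \in K$ and $Y \in \mathfrak{p}$ using the Cartan decomposition. Then I would invoke the classical result that any element of $\mathfrak{p}$ is $\mathrm{Ad}(K)$-conjugate to some element of $\mathfrak{a}$ (equivalently, every maximal abelian subspace of $\mathfrak{p}$ is $\mathrm{Ad}(K)$-conjugate to $\mathfrak{a}$): this yields $k_0 \in K$ and $x' \in \mathfrak{a}$ with $Y = \mathrm{Ad}(k_0)\,x'$, so $\exp(Y) = k_0 \exp(x') k_0^{-1}$. Finally, I would use the fact that the Weyl group $W = N_K(S)/Z_K(S)$ acts on $\mathfrak{a}$ with $\mathfrak{a}^+$ as a fundamental domain, realizing this action by conjugation by representatives in $N_K(S) \subset K$: this lets me replace $x'$ by some $x \in \mathfrak{a}^+$ lying in its $W$-orbit, while absorbing the conjugating elements into new factors $k_1, k_2 \in K$.

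For uniqueness of $x$, suppose $k_1 \exp(x) k_2 = k_1' \exp(x') k_2'$ with $x,x' \in \mathfrak{a}^+$. Rearranging gives $\exp(x) = k \exp(x') k'$ for some $k,k' \in K$. Applying the Cartan decomposition uniqueness to $\exp(x) \cdot (k')^{-1} = k \exp(x')$, I would get $k k' = 1$ and $\exp(x) = k \exp(x') k^{-1}$, so that $x$ and $x'$ lie in the same $\mathrm{Ad}(K)$-orbit in $\mathfrak{p}$ (after taking $\log$, which is well-defined because $\exp: \mathfrak{p}\to \exp(\mathfrak{p})$ is a diffeomorphism). The intersection of any $\mathrm{Ad}(K)$-orbit with $\mathfrak{a}$ is a single $W$-orbit, hence $x' = w\cdot x$ for some $w \in W$; since $\mathfrak{a}^+$ is a strict fundamental domain for $W$, we conclude $x = x'$.

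The main obstacle, as I see it, is the step identifying the $\mathrm{Ad}(K)$-orbit of an element of $\mathfrak{a}$ with its $W$-orbit when intersected back with $\mathfrak{a}$: this is really the content of the theorem and requires one to know either the structure of restricted root spaces on $\mathfrak{p}$, or a Weyl integration argument, or to appeal to the theory of symmetric spaces of the noncompact type. Everything else is essentially bookkeeping once the Cartan decomposition and this conjugacy statement are available, which is why the statement is quoted rather than proved in the paper.
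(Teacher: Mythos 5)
The paper gives no proof of this proposition---it is quoted directly from Knapp's Theorem 7.39---and your sketch is precisely the standard argument behind that cited result: the Cartan decomposition $G=K\exp(\mathfrak{p})$ with $\mathfrak{p}=i\mathfrak{k}$, the $\mathrm{Ad}(K)$-conjugacy of any element of $\mathfrak{p}$ into the maximal abelian subspace $\mathfrak{a}$, the closed chamber $\mathfrak{a}^+$ as a strict fundamental domain for $W$, and for uniqueness the rewriting $\exp(x)(k')^{-1}=(k')^{-1}\exp(\mathrm{Ad}(k')x)$ together with the fact that $\mathrm{Ad}(K)$-conjugate elements of $\mathfrak{a}$ lie in the same $W$-orbit. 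Your outline is correct modulo the classical inputs you explicitly flag (which carry the real content), so it matches the approach of the reference the paper appeals to rather than offering a genuinely different route.
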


We first remark that if $h$ is $K\times K$-invariant, then its potential with respect 
to the section constructed above is still $K\times K$-invariant.

\begin{prop}
Assume that $h$ is $K\times K$-invariant, then $\phi$ is also $K\times K$-invariant. 
\end{prop}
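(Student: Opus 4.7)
The goal is to prove the pointwise identity $\phi(k_1 g k_2^{-1}) = \phi(g)$ for every $(k_1, k_2)\in K\times K$ and $g\in G$. The strategy is to compare the two nonzero vectors of the one-dimensional fiber $L_{k_1 g k_2^{-1}}$ that are naturally in sight: $s(k_1 g k_2^{-1})$, defined from the trivialization, and $(k_1, k_2)\cdot s(g)$, obtained from the linearized action. They must differ by a scalar, and the $G\times\{e\}$-equivariance built into $s$ already gives us half of that action for free.

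To pin down the scalar I would exploit the identity $(k_1, k_2)(g, e) = (k_1 g k_2^{-1}, e)(k_2, k_2)$ in $G\times G$, which isolates the diagonal element $(k_2, k_2)$ stabilizing $e$. Applying both sides to $1_e$ and using the linearization yields
\[
(k_1, k_2)\cdot s(g) = (k_1 g k_2^{-1}, e)\cdot\bigl[(k_2, k_2)\cdot 1_e\bigr] = \chi(k_2)\, s(k_1 g k_2^{-1}),
\]
where $\chi(k_2)\in \mathbb{C}^*$ is the scalar by which $(k_2, k_2)$ acts on the one-dimensional fiber $L_e$. Taking norms and using the $K\times K$-invariance of $h$ reduces the whole claim to the assertion $|\chi(k)| = 1$ for all $k\in K$.

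The last step is the main (and essentially only) observation: $k\mapsto \chi(k)$ is a continuous homomorphism $K\to\mathbb{C}^*$, because it is obtained from the restriction of the linearization to the diagonal stabilizer of $e$. Its composition with $|\cdot|$ is then a continuous homomorphism from the compact group $K$ to $\mathbb{R}_{>0}$, and the only compact subgroup of $\mathbb{R}_{>0}$ is trivial. Hence $|\chi|\equiv 1$, and $\phi(k_1 g k_2^{-1}) = \phi(g)$ follows. The only real subtlety is recognizing that $s$ genuinely fails to be equivariant under the right factor of $G\times G$, with the defect captured by a unitary character that vanishes on taking norms; once this is identified the argument is just unpacking definitions.
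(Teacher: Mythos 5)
Your proof is correct and follows essentially the same route as the paper: the same rearrangement $(k_1,k_2)(g,e)=(k_1gk_2^{-1},e)(k_2,k_2)$ isolating the diagonal stabilizer of $e$, the same character $\chi$ of the diagonal acting on $L_e$, and the same compactness argument forcing $|\chi|\equiv 1$ on $K$ (which the paper states in one line and you justify explicitly). The only differences are cosmetic — you phrase the invariance with $k_1 g k_2^{-1}$ rather than $k_1 g k_2$ and move the scalar to the other side of the identity.
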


\begin{proof}   
Let $k_1, k_2 \in K$ and $g\in G$.
We can first write 
\begin{align*}
s(k_1 g k_2) & = (k_1 g k_2,e)\cdot 1_e \\
	& = (k_1, k_2^{-1}) (g,e) (k_2,k_2) \cdot 1_e.
\end{align*}
The subgroup $\mathrm{diag}(G)=\{(g,g)|g\in G\}$ fixes the neutral element $e\in G$, 
and thus acts on the fiber $L_e$ through a character $\chi$ of $G$, so that 
$(g,g)\cdot 1_e= \chi(g) 1_e$.

We can thus compute 
\begin{align*}
\phi(k_1 g k_2) & = -\ln(|s(k_1 g k_2)|_h^2) \\
	& = -\ln(|(k_1, k_2^{-1}) (g,e) (k_2,k_2) \cdot 1_e|_h^2) \\
\intertext{by $K\times K$-invariance of $h$, this is}
	& =  -\ln(|(g,e) (k_2,k_2) \cdot 1_e|_h^2) \\
\intertext{and, by linearity,}
	& = -\ln(|\chi(k_2)| |(g,e) \cdot 1_e|_h^2) \\
\intertext{Since $K$ is compact, $|\chi(k_2)|=1$, so we obtain}
\phi(k_1 g k_2)	& = -\ln(|(g,e) \cdot 1_e|_h^2) \\
	& = \phi(g).
\end{align*}
\end{proof}

Assume that $h$ is in addition non-negatively curved. Then $\phi$ is a 
$K\times K$-invariant plurisubharmonic function on $G$. Let $u$ be the function on 
$\mathfrak{a}$ defined by 
\[
u(x)= \phi(\exp(x)).
\]
Then Azad and Loeb proved in \cite{AL92} that the function $u$ is convex and 
$W$-invariant.

In particular, since we assumed that the local potentials of singular hermitian 
metrics are locally integrable, the $K\times K$-invariance of $h$ ensures that the 
functions $u$, respectively $ \phi$ take finite values on $\mathfrak{a}$, resp. $G$. 
Indeed, a convex function that takes an infinite value at a point must take an 
infinite value 
on a whole octant starting from that point, and then the corresponding 
$K\times K$-invariant function on $G$ is not locally integrable.

\begin{defn}
We will call $u$ the \emph{convex potential} of $h$. 
\end{defn}

\subsection{Asymptotic behavior of the convex potential}

\subsubsection{A special metric}
\label{BTmetric}

Let us begin by introducing a continuous, $K\times K$-invariant,  reference hermitian   
metric on $L$. We start from the Batyrev-Tschinkel metric 
defined on toric manifolds, and generalize it to build a reference continuous 
metric for any polarized group  compactification $(X,L)$, with convex potential the 
support function of the polytope $2 P(X,L)$.

Given a toric manifold $Z$, equipped with a linearized line bundle $D$, there is a 
natural continuous hermitian metric $h_D$, invariant under the action of the compact 
torus, on $D$, called the Batyrev-Tschinkel metric (see \cite[Section 3.3]{Mai00}). 
If furthermore the line bundle $D$ is ample, then this metric is non-negatively curved, 
and its convex potential is the support function $v$ of the polytope $2 P(Z,D)$. 

Suppose now that $(X,L)$ is a polarized group compactification, and $Z$ is the toric 
submanifold. Denote $L|_Z$ by $D$. Then $P(Z,D)=P(X,L)$ is $W$-invariant, which implies 
that the Batyrev-Tschinkel metric $h_D$ is $W$-invariant. 

We want to extend $h_D$ to a continuous $K\times K$-invariant metric $h_L$ on $X$. 
Define $h_L$ at $\xi \in L_g$ by $|\xi|_{h_L}=|(k_1,k_2)\cdot \xi|_{h_D}$, for 
$(k_1,k_2) \in K\times K$ such that $k_1 g k_2^{-1} \in T$. We need to check that 
this is well defined. Since $h_D$ is $W$-invariant we only need to check that, for 
$t\in T$,  if $(k_1,k_2)\in \mathrm{Stab}_{K\times K}(t)$ then 
$|\xi|_{h_D}=|(k_1,k_2)\cdot \xi|_{h_D}$. But $\mathrm{Stab}_{K\times K}(t)$ acts 
linearly on the line $L_t$, through a character $\chi$. By compacity, 
$|\chi(k_1,k_2)|=1$, so 
$|(k_1,k_2)\cdot \xi|_{h_D}=|\chi(k_1,k_2)\xi|_{h_D}= |\xi|_{h_D}$.

\subsubsection{Asymptotic behavior}

\begin{thm}
\label{Asbe}
The singular hermitian $K\times K$-invariant metrics $h$ with non negative current 
curvature are in bijection with the convex $W$-invariant functions 
$u :\mathfrak{a}\longrightarrow \mathbb{R}$
satisfying the condition that there exists a constant $C_1\in \mathbb{R}$ such that 
\[
u(x) \leq v(x)+C_1
\]
on $\mathfrak{a}$, where $v$ is the support function of the polytope $2 P(X,L)$.
This bijection is obtained by associating to $h$ its convex potential $u$.
Furthermore, $h$ is locally bounded if and only if there exists in addition a constant 
$C_2\in \mathbb{R}$ such that 
\[
v(x)+C_2 \leq \varphi(x) \leq v(x)+C_1.
\]
\end{thm}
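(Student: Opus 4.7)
The plan is to compare every $K\times K$-invariant metric $h$ with the continuous reference metric $h_L$ of Section~\ref{BTmetric}, whose convex potential on $\mathfrak{a}$ is exactly $v$. Let $\phi_0$ denote the local potential of $h_L$ with respect to $s$ on $G$, so that $\phi_0\circ\exp=v$ on $\mathfrak{a}$, and work throughout with the global potential $\psi=\phi-\phi_0$ of $h$ with respect to $h_L$. The asymptotic bound $u\leq v+C_1$ on $\mathfrak{a}$ is then exactly the bound $\psi\leq C_1$ on $A^+$, which by $K\times K$-invariance extends to the same bound on $G$.

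For the forward direction, suppose $h$ is $K\times K$-invariant with non-negative curvature. The preceding proposition yields the $K\times K$-invariance of $\phi$, and the theorem of Azad and Loeb cited above gives the convexity and $W$-invariance of $u$. For the upper bound, observe that $\psi$ is in fact a well-defined function on all of $X$ (not merely on $G$), as the log-quotient of the norms of the same section with respect to $h$ and $h_L$, and it is $\omega_0$-plurisubharmonic for $\omega_0$ the curvature current of $h_L$. Compactness of $X$ then forces $\psi\leq C_1$ for some constant, and restriction to $A^+$ yields the claimed bound on $u$.

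For the inverse direction, given $u$ convex, $W$-invariant, and satisfying $u\leq v+C_1$, define $\phi$ on $G$ by $\phi(k_1\exp(x)k_2):=u(x)$ via the $KAK$ decomposition; $W$-invariance of $u$ ensures well-definedness. The converse of the Azad--Loeb correspondence implies $\phi$ is plurisubharmonic on $G$, hence $\psi:=\phi-\phi_0$ is $\omega_0$-plurisubharmonic on $G$ and bounded above by $C_1$. Since $X\setminus G$ is a proper analytic subset, hence pluripolar, the standard extension theorem for quasi-plurisubharmonic functions produces a unique $\omega_0$-plurisubharmonic extension of $\psi$ to $X$, realized as its upper semicontinuous envelope. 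This extension determines a singular hermitian metric $h$ on $L$ with non-negative curvature, $K\times K$-invariant by uniqueness of the extension, with convex potential $u$ by construction. Injectivity of $h\mapsto u$ follows because $u$ determines $\phi$ on the dense open set $G$, and a singular hermitian metric on $L$ is determined by its values there. The locally bounded statement then reduces to the previous setup: continuity of $h_L$ makes $\phi_0$ locally bounded on $X$, so $h$ is locally bounded iff $\psi$ is locally bounded on $X$, equivalently globally bounded on $X$ by compactness, equivalently, by $K\times K$-invariance and the $KAK$ decomposition, the two-sided bound on $u-v$.

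The main analytic obstacle is the extension step across the boundary divisor $X\setminus G$ in the inverse direction: one must verify that the mere upper bound $\psi\leq C_1$ is enough for the $\omega_0$-plurisubharmonic function built from $u$ on $G$ to extend to a genuine $\omega_0$-plurisubharmonic function on $X$, and that this extension recovers the convex potential $u$ one started with. This is precisely why the Batyrev--Tschinkel-type reference metric $h_L$, with its boundary behavior tailored to $2P(X,L)$, is the appropriate model, and why the bound in the theorem is phrased in terms of the support function $v$ of $2P(X,L)$.
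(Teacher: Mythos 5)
Your proposal is correct and takes essentially the same route as the paper: the Azad--Loeb correspondence in both directions, $K\times K$-invariance of the potential via the $KAK$ decomposition, and the extension theorem for plurisubharmonic functions across the analytic boundary $X\setminus G$ under the upper bound coming from $u\leq v+C_1$, with the locally bounded case handled by the two-sided bound exactly as in the paper. The only (harmless) deviation is that in the converse direction the paper switches to a smooth, positively curved, $K\times K$-invariant reference metric $h_0$ satisfying $v+C_2\leq u_0\leq v+C_1$ before invoking the extension theorem, whereas you keep the continuous reference $h_L$ throughout; this still works because $\omega_L$ has continuous local potentials, so the local extension argument is unaffected.
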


\begin{proof}
Let $h$ be a singular hermitian $K\times K$-invariant metric with non negative 
current curvature on $L$. Let $u$ be its convex potential. Recall that $h_L$ denotes 
the reference continuous metric constructed above, and let $\omega_L$ be the 
curvature current of $h_L$. Denote by $\psi$ the potential of $h$ with respect to 
$h_L$. It is an $\omega_L$-psh function on $X$. In particular, $\psi$ is bounded from 
above on $X$.

Denote by $w$ the function on $\mathfrak{a}$ associated to the $K\times K$-invariant 
function $\psi|_G$. Then we see that the function $u-v$ is equal to $w$ 
and thus bounded from above.

If furthermore $h$ is locally bounded then since $h_L$ is also locally bounded, the 
function $w$ is bounded on $X$. So $w=u-v$ is bounded on $\mathfrak{a}$.

Conversely, let $u$ be a convex $W$-invariant function such that $u(x) \leq v(x)+C$.
We choose any reference metric $h_0$ on $L$ that is smooth, positively curved and 
$K\times K$-invariant. Then by the first direction there exist constants $C_1$ and 
$C_2$ such that if $u_0$ is the potential of $h_0$ we have
\[
v(x)+C_2 \leq u_0(x) \leq v(x)+C_1.
\]
Let $\omega_0$ be the curvature form of $h_0$.

Consider the function $w:= u - u_0$. It will be enough to show that the function 
$\psi$ on $G$ corresponding to $w$ extends to an $\omega_0$-psh function on $X$.

First remark that $\psi=\phi-\phi_0$, and by the other direction of the result of 
Azad and Loeb \cite{AL92}, $\phi$ is psh on $G$. The assumption on $u$ implies that 
$w$, and thus $\psi$, are bounded from above. Indeed, we have 
\[
w=u-u_0 \leq v+C-u_0 \leq C-C_2.
\]

A classical result on psh functions is that a psh function extends over an analytic 
subset if and only if it is locally bounded from above. Here, applying that with 
$\psi$ allows to extend $\psi$ to an $\omega_0$-psh function on $X$. The corresponding 
singular hermitian metric $h$ has non-negative curvature, is $K\times K$-invariant, 
and has convex potential $u$.

For locally bounded metrics, one just needs to use the refinement that if 
a psh function is locally bounded then it extends to a bounded psh function.
\end{proof}

\section{Newton bodies}

In this section we introduce a convex body associated to any 
non-negatively curved singular $K\times K$-invariant hermitian metric $h$
on an ample linearized line bundle $L$ on a group compactification $X$.
We first define a convex set associated to any function, which is a natural 
set to consider in the case of convex functions. Applying this construction 
to the convex potential of a hermitian metric yields a convex body that is 
contained in $2P(X,L)$, that will be used to compute the log canonical 
threshold of $h$.

\subsection{Newton set of a function}

\begin{defn}
Let $f$ be a function $\mathfrak{a}\rightarrow \mathbb{R}$, 
and $\sigma$ a closed convex cone in $\mathfrak{a}$.
We call \emph{Newton set} of $f$ the following set in $\mathfrak{a}^*$. 
\[
N_{\sigma}(f):=\{m\in \mathfrak{a}^*; \exists C, \forall x \in \sigma,  f(x)-m(x) \geq C\}.
\]
\end{defn}

In the following, we will simply call cone a closed convex cone.
For any function $f$ and any cone $\sigma$, the Newton set $N_{\sigma}(f)$ is clearly convex.

Recall the definition of  the dual cone $\sigma^{\vee}$ of $\sigma$:
\[
\sigma^{\vee}=\{ m\in \mathfrak{a}^* ; m(x)\geq 0 ~ \forall x \in \sigma \}.
\]
The Newton set $N_{\sigma}(f)$ is by definition  stable under addition 
of an element of the opposite of the dual cone 
$\sigma^{\vee} \subset \mathfrak{a}^*$.
We write this also 
$N_{\sigma}(f)=N_{\sigma}(f)+(-\sigma^{\vee})$
where the plus sign means the Minkowki sum of sets.

\begin{exa}
Let $f$ be the affine function $f(x)=m(x)+c$ where $m\in \mathfrak{a}^*$
and $c$ is a constant. Then $N_{\sigma}(f)=m+(-\sigma^{\vee})$. 
\end{exa}

Let us record the following elementary properties of Newton sets. 

\begin{prop}
\label{easyprop}
Let $f$ and $g$ be two functions on $\mathfrak{a}$ and $c\in \mathbb{R}$. Then 
\begin{itemize}
\item $N_{\sigma}(cf)=cN_{\sigma}(f)$
\item $N_{\sigma}(f+c)=N_{\sigma}(f)$
\item if $f\leq g$ then $N_{\sigma}(f)\leq N_{\sigma}(g)$. 
\item In particular, if for some constants $c_1$ and $c_2$, 
\[
g+c_1\leq f \leq g+c_2
\]
on $\sigma$, then $N_{\sigma}(f)=N_{\sigma}(g)$.
\item Let $\sigma_1$ and $\sigma_2$ be two convex cones such that $\sigma=\sigma_1\cup \sigma_2$, 
then 
\[
N_{\sigma}(f)=N_{\sigma_1}(f)\cap N_{\sigma_2}(f).
\]
\end{itemize} 
\end{prop}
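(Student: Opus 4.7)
The plan is to verify each of the five bullets directly from the definition $N_\sigma(f)=\{m\in\mathfrak{a}^*:\exists C,\ \forall x\in\sigma,\ f(x)-m(x)\geq C\}$; none of them requires anything beyond manipulating the inequality inside the quantifiers. Since the first two bullets involve affine modifications of $f$, I would handle them by substitution. For the scaling identity $N_\sigma(cf)=cN_\sigma(f)$ (interpreting this for $c>0$, which is what the statement presumably intends), I would observe that $cf(x)-m(x)\geq C$ is equivalent to $f(x)-(m/c)(x)\geq C/c$, so $m\in N_\sigma(cf)$ iff $m/c\in N_\sigma(f)$. For $N_\sigma(f+c)=N_\sigma(f)$, the constant $c$ on the left is absorbed into the constant from the defining quantifier.

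For the monotonicity $f\leq g\Rightarrow N_\sigma(f)\subset N_\sigma(g)$, I would note that if $f(x)-m(x)\geq C$ on $\sigma$, then $g(x)-m(x)\geq f(x)-m(x)\geq C$ on $\sigma$, so the same $m$ and the same $C$ work. The squeeze statement that $g+c_1\leq f\leq g+c_2$ forces $N_\sigma(f)=N_\sigma(g)$ is then immediate: applying monotonicity gives $N_\sigma(g+c_1)\subset N_\sigma(f)\subset N_\sigma(g+c_2)$, and the translation invariance from the second bullet collapses both outer sets to $N_\sigma(g)$.

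The cone-decomposition bullet $N_\sigma(f)=N_{\sigma_1}(f)\cap N_{\sigma_2}(f)$ when $\sigma=\sigma_1\cup\sigma_2$ is the only step with any substance, and even it is routine. The inclusion $\subset$ is monotonicity applied to the inclusion $\sigma_i\subset\sigma$ (one may take the same constant $C$). Conversely, given $m\in N_{\sigma_1}(f)\cap N_{\sigma_2}(f)$ with witnesses $C_1,C_2$, set $C=\min(C_1,C_2)$; then $f(x)-m(x)\geq C$ for every $x\in\sigma_1\cup\sigma_2=\sigma$, so $m\in N_\sigma(f)$.

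There is no real obstacle here: the entire proposition consists of formal consequences of the definition, and the only mildly delicate point is to make sure that when decomposing $\sigma$ one replaces two separate constants by their minimum rather than trying to use the same constant from the start. I would write the proof as a short sequence of one- or two-line verifications in the order the statements are listed, reusing each item when it streamlines the next.
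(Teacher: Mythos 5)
Your verifications are correct, and they coincide with the argument the paper implicitly has in mind: Proposition~\ref{easyprop} is stated without proof precisely because each item follows by direct manipulation of the defining inequality, exactly as you do (including the only mildly nontrivial point, taking the minimum of the two constants in the fan-decomposition item). Your caveat that the scaling identity $N_\sigma(cf)=cN_\sigma(f)$ should be read for $c>0$ is also well taken, since that is the only way it is used later (e.g.\ in the proof of Theorem~\ref{exprlct}).
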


The last property is very helpful when we want to use a fan decomposition.

\begin{exa}
\label{CPLNew}
Let $v:\mathfrak{a} \rightarrow \mathbb{R}$ be a piecewise linear function along a finite fan 
decomposition $\sigma_0 = \cup_{i=1}^N \sigma_i$ where $N\in \mathbb{N}$ and the $\sigma_i$
are convex cones. For $1\leq i\leq N$, denote by $v_i$ the element of $\mathfrak{a}^*$ such that 
$v(x)= v_i(x)$ on $\sigma_i$.
Then 
\[
N_{\sigma}(v)=\bigcap_{i=1}^N (v_i+(-\sigma_i^{\vee})).
\]

If furthermore $v$ is convex, then 
$N_{\sigma}(v)= \mathrm{Conv}\{v_i\}+(-\sigma^{\vee}).$
\end{exa}

\subsection{Newton set of convex functions}

For this subsection only, we will allow convex functions to take the value 
$+\infty$. If $f$ is such a function we define its \emph{domain}
by 
\[
\mathrm{dom}(f):=\{x \in \mathfrak{a} ; f(x)<\infty\}.
\]
We impose however that all functions considered have a non empty domain.
In the rest of the section, we always assume $\mathrm{dom}(f)=\mathfrak{a}$.

The first remark is that the Newton set of a function $f$
on the whole of $\mathfrak{a}$ is the domain of its \emph{Legendre-Fenchel 
transform} (or convex conjugate) $f^*$ defined, for $m\in \mathfrak{a}^*$, by 
\[
f^*(m):=\mathrm{sup}\{m(x)-f(x) ; x\in \mathfrak{a}\}.
\]

Let $\sigma$ be a convex cone, and define the convex function 
$\delta_{\sigma}$ as the indicator function of $\sigma$, \emph{i.e.}
$\delta_{\sigma}(x)=0$ if $x\in \sigma$ and $\delta_{\sigma}(x)=\infty$ 
otherwise.
Then it is not hard to check that 
$N_{\sigma}(f)=N_{\mathfrak{a}}(f+\delta_{\sigma})$.
In other words $N_{\sigma}(f)$ is the domain of the convex conjugate 
of $f+\delta_{\sigma}$. 

We will recall a classical result on convex functions, which allows 
to express the Newton set of a sum as the Minkowski sum of the 
Newton sets of the summands.
First recall the definition of infimal convolution:

\begin{defn}
Let $f$ and $g$ be two convex function. The \emph{infimal convolution}
of $f$ and $g$ is the function $f\square g$ defined, for $x\in \mathfrak{a}$, by 
\[
f\square g(x)=\mathrm{inf}\{f(x-y)+g(y); y\in \mathfrak{a}\}.
\]
\end{defn}

\begin{thm}
\label{Rockafellar}
\cite[Theorem 16.4]{Roc97}
Let $f$ and $g$ be two convex functions on $\mathfrak{a}$, such that 
the relative interiors of the domains of $f$ and $g$ have a point 
in common. Then 
\[
(f+g)^*(m)=f^*\square g^*.
\]
\end{thm}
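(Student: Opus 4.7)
The plan is to establish the equality by proving the two inequalities $(f+g)^* \leq f^* \square g^*$ and $(f+g)^* \geq f^* \square g^*$ separately, noting that only the second uses the relative interior hypothesis.

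The first inequality is essentially formal. For any decomposition $m = m_1 + m_2$ and any $x \in \mathfrak{a}$, one has
\[
m(x) - (f+g)(x) = (m_1(x) - f(x)) + (m_2(x) - g(x)) \leq f^*(m_1) + g^*(m_2),
\]
so taking the supremum over $x$ on the left and then the infimum over decompositions on the right yields $(f+g)^*(m) \leq (f^* \square g^*)(m)$ with no assumption on $f$ or $g$.

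For the reverse inequality, the plan is to use Hahn--Banach separation in $\mathfrak{a} \times \mathbb{R}$. Fix $m$ and pick any $\alpha > (f+g)^*(m)$; the goal is to construct a decomposition $m = m_1 + m_2$ with $f^*(m_1) + g^*(m_2) \leq \alpha$. Consider the epigraph $A = \{(x,t) : t \geq f(x)\}$ and the set $B = \{(x,s) : s \leq m(x) - g(x) - \alpha\}$. The choice of $\alpha$ gives $m(x) - g(x) - \alpha < f(x)$ for every $x$, so the relative interiors of $A$ and $B$ are disjoint convex sets. A separating affine functional then has the form $(x,r) \mapsto n(x) + \mu r$, and the plan is to show $\mu > 0$, rescale to $\mu = 1$, set $m_1 = -n$ and $m_2 = m - m_1$, and read off $f^*(m_1) \leq c$ and $g^*(m_2) \leq \alpha - c$ from the two inequalities, which sum to the desired bound.

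The main obstacle is ensuring that the separator is non-vertical, i.e., that the coefficient $\mu$ is strictly positive. If $\mu = 0$, the separator descends to one separating the effective domains $\mathrm{dom}(f)$ and $\mathrm{dom}(g)$ inside $\mathfrak{a}$, which is impossible precisely because their relative interiors share a point (by the standard proper-separation theorem for convex sets with meeting relative interiors). This is the unique place where the hypothesis is used. Without it, one only obtains the weaker identity $(f+g)^* = \mathrm{cl}(f^* \square g^*)$, and the infimum defining the infimal convolution may fail to be attained.
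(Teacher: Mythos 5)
The paper itself gives no argument for this statement (it is quoted verbatim from Rockafellar, Theorem 16.4, whose own derivation goes through the formula for the conjugate of the image of a convex function under a linear map, applied to the diagonal embedding $x\mapsto(x,x)$), so your proposal is to be judged on its own. Your overall route is the classical direct separation proof, and most of it is sound: the formal inequality $(f+g)^*\leq f^*\square g^*$, the choice of $A=\mathrm{epi}(f)$ and of the hypograph $B$ of $m-g-\alpha$, their disjointness from $\alpha>(f+g)^*(m)$, the sign argument forcing $\mu\geq 0$, and the extraction of $m_1=-n$, $m_2=m+n$ with $f^*(m_1)+g^*(m_2)\leq\alpha$ once $\mu$ is normalized to $1$.

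The gap is in ruling out $\mu=0$. You claim that a vertical separator descends to a hyperplane separating $\mathrm{dom}(f)$ and $\mathrm{dom}(g)$, ``which is impossible because their relative interiors share a point.'' That is false for plain separation: two convex sets whose relative interiors meet can perfectly well be separated, namely when both lie in a common hyperplane. Concretely, if $\mathfrak{a}$ has dimension $2$ and $\mathrm{dom}(f)=\mathrm{dom}(g)$ is a segment inside the line $\{n=c\}$, then the vertical hyperplane $\{n=c\}\times\mathbb{R}$ separates $A$ and $B$ even though the hypothesis of the theorem holds; so if your separating functional happens to be this one, your contradiction evaporates and you cannot conclude $\mu>0$. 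What the relative-interior hypothesis forbids is \emph{proper} separation of the domains (Rockafellar, Theorem 11.3). The repair is standard but must be said: (a) choose a \emph{proper} separation of $A$ and $B$, which exists because they are nonempty, convex and disjoint, hence have disjoint relative interiors; and (b) note that for a vertical hyperplane, $A$ (resp.\ $B$) is contained in it if and only if $\mathrm{dom}(f)$ (resp.\ $\mathrm{dom}(g)$) is contained in its projection $\{n=c\}$, so properness descends to a proper separation of the domains, which is what contradicts the hypothesis and forces $\mu>0$. With that bookkeeping inserted the proof is complete for the statement as the paper records it; note that Rockafellar's theorem also asserts attainment of the infimum in $f^*\square g^*$, which your $\alpha>(f+g)^*(m)$ approximation does not give, but the paper does not use or state attainment.
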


\begin{prop}
Let $\sigma$ be a convex cone, and $f$ a convex function with 
$\mathrm{dom}(f)=\mathfrak{a}$, then 
\[
N_{\sigma}(f)= N_{\mathfrak{a}}(f)+(-\sigma^{\vee}).
\]
\end{prop}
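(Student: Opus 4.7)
The plan is to interpret $N_{\sigma}(f)$ as the effective domain of a Legendre-Fenchel transform of the sum $f+\delta_{\sigma}$, and then use Theorem~\ref{Rockafellar} to replace that sum's transform by an infimal convolution, whose domain computes as a Minkowski sum.

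First I would record the two identities that convert the statement into a computation of effective domains. The observation made just before the proposition gives $N_{\sigma}(f)=N_{\mathfrak{a}}(f+\delta_{\sigma})$, and unwinding the definitions of $N_{\mathfrak{a}}$ and of the Legendre-Fenchel transform shows that for any convex function $g$ with $\mathrm{dom}(g)=\mathfrak{a}$ one has $N_{\mathfrak{a}}(g)=\mathrm{dom}(g^{*})$ (this was already noted in the text). Combined, these give $N_{\sigma}(f)=\mathrm{dom}\bigl((f+\delta_{\sigma})^{*}\bigr)$ and $N_{\mathfrak{a}}(f)=\mathrm{dom}(f^{*})$.

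Next I would compute $\delta_{\sigma}^{*}$ directly. Since $\sigma$ is a cone containing $0$, the supremum $\sup_{x\in \sigma}m(x)$ equals $0$ when $m\in -\sigma^{\vee}$ (achieved at $x=0$) and $+\infty$ otherwise (by rescaling any $x_{0}\in \sigma$ with $m(x_{0})>0$). Hence $\delta_{\sigma}^{*}=\delta_{-\sigma^{\vee}}$. The hypothesis $\mathrm{dom}(f)=\mathfrak{a}$ makes the relative-interior condition of Theorem~\ref{Rockafellar} automatic, since any point in the relative interior of $\sigma$ will do, so the theorem yields
\[
(f+\delta_{\sigma})^{*}=f^{*}\square \delta_{-\sigma^{\vee}}.
\]

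The final step is to observe that the effective domain of an infimal convolution is the Minkowski sum of the effective domains: $(g\square h)(m)<\infty$ if and only if $m$ admits a decomposition $m=m_{1}+m_{2}$ with $g(m_{1}),h(m_{2})<\infty$. Applied to $g=f^{*}$ and $h=\delta_{-\sigma^{\vee}}$, whose effective domain is exactly $-\sigma^{\vee}$, this gives
\[
N_{\sigma}(f)=\mathrm{dom}(f^{*})+(-\sigma^{\vee})=N_{\mathfrak{a}}(f)+(-\sigma^{\vee}),
\]
as claimed. The only delicate point is verifying the qualification hypothesis of Theorem~\ref{Rockafellar}, which the assumption $\mathrm{dom}(f)=\mathfrak{a}$ trivialises; everything else is a routine unwinding of the definitions of Legendre-Fenchel transform, indicator function of a cone, and infimal convolution.
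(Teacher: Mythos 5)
Your proposal is correct and follows essentially the same route as the paper: identify $N_{\sigma}(f)$ with the domain of $(f+\delta_{\sigma})^{*}$, invoke Theorem~\ref{Rockafellar} (whose qualification hypothesis holds because $\mathrm{dom}(f)=\mathfrak{a}$ meets the relative interior of $\sigma$), and use that the domain of an infimal convolution is the Minkowski sum of domains together with $\mathrm{dom}(\delta_{\sigma}^{*})=-\sigma^{\vee}$. Your explicit computation $\delta_{\sigma}^{*}=\delta_{-\sigma^{\vee}}$ just spells out the step the paper leaves as a routine check.
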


\begin{proof}
We have seen that $N_{\sigma}(f)$ is the domain of the convex conjugate 
of $f+\delta_{\sigma}$, but by Theorem~\ref{Rockafellar}, this is also 
the domain of the function $f^*\square \delta_{\sigma}^*$. We can apply this 
theorem because the intersection of the domains of $f$ and $\delta_{\sigma}$
is $\sigma$.

The domain of an infimal convolution is the Minkowski sum of the domains 
of the two functions involved,  
so we just need to compute the domain 
of $\delta_{\sigma}^*$.
By definition we check that this is $-\sigma^{\vee}$, and obtain 
the statement.
\end{proof}

\begin{prop}
\label{Newsum}
Let $f$ and $g$ be two convex functions, both with domain
$\mathfrak{a}$, 
and $\sigma$ a convex cone.
Then $N_{\sigma}(f+g)=N_{\sigma}(f)+N_{\sigma}(g)$.
\end{prop}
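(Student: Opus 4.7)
The plan is to reduce the statement to the case $\sigma = \mathfrak{a}$ and then appeal to Rockafellar's theorem, exactly as in the proof of the preceding proposition. The easy inclusion $N_{\sigma}(f) + N_{\sigma}(g) \subseteq N_{\sigma}(f+g)$ would be dispatched first: if $f(x)-m_1(x) \geq C_1$ and $g(x)-m_2(x) \geq C_2$ on $\sigma$, then $(f+g)(x)-(m_1+m_2)(x) \geq C_1+C_2$ on $\sigma$.

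For the harder reverse inclusion, the key observation is that by the preceding proposition,
\[
N_{\sigma}(f) + N_{\sigma}(g) = \bigl(N_{\mathfrak{a}}(f)+(-\sigma^{\vee})\bigr) + \bigl(N_{\mathfrak{a}}(g)+(-\sigma^{\vee})\bigr) = N_{\mathfrak{a}}(f) + N_{\mathfrak{a}}(g) + (-\sigma^{\vee}),
\]
using that $\sigma^{\vee}$ is a convex cone and hence $(-\sigma^{\vee})+(-\sigma^{\vee}) = -\sigma^{\vee}$. Similarly $N_{\sigma}(f+g) = N_{\mathfrak{a}}(f+g) + (-\sigma^{\vee})$. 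So it suffices to establish the identity for $\sigma = \mathfrak{a}$, that is, $N_{\mathfrak{a}}(f+g) = N_{\mathfrak{a}}(f) + N_{\mathfrak{a}}(g)$.

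Here I would invoke the identification, noted earlier in the excerpt, between $N_{\mathfrak{a}}(h)$ and the domain $\mathrm{dom}(h^*)$ of the Legendre-Fenchel transform of $h$. Since both $f$ and $g$ have domain all of $\mathfrak{a}$, the hypothesis of Theorem~\ref{Rockafellar} is trivially satisfied, giving $(f+g)^* = f^*\square g^*$. Combining this with the standard fact that the domain of an infimal convolution is the Minkowski sum of the domains of its terms yields
\[
N_{\mathfrak{a}}(f+g) = \mathrm{dom}\bigl(f^*\square g^*\bigr) = \mathrm{dom}(f^*) + \mathrm{dom}(g^*) = N_{\mathfrak{a}}(f) + N_{\mathfrak{a}}(g),
\]
which completes the argument after recombining with $(-\sigma^{\vee})$.

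There is no real obstacle here: the entire proof is a transcription through Legendre duality, and the only subtlety is the bookkeeping step $(-\sigma^{\vee})+(-\sigma^{\vee}) = -\sigma^{\vee}$, which uses only that $\sigma^{\vee}$ is a convex cone. The essential analytic content is already packaged in Theorem~\ref{Rockafellar} and in the preceding proposition describing $N_{\sigma}$ in terms of $N_{\mathfrak{a}}$.
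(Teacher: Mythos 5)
Your proof is correct and follows essentially the same route as the paper: reduce to $\sigma=\mathfrak{a}$ via the preceding proposition (absorbing the cone with $(-\sigma^{\vee})+(-\sigma^{\vee})=-\sigma^{\vee}$), then apply Theorem~\ref{Rockafellar} together with the fact that the domain of an infimal convolution is the Minkowski sum of the domains. The paper merely compresses the $\sigma=\mathfrak{a}$ step into the phrase ``by the same proof,'' which you have simply spelled out.
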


\begin{proof}
We have, by the previous proposition, 
\[
N_{\sigma}(f+g)= N_{\mathfrak{a}}(f+g)+(-\sigma^{\vee}),
\]
and by the same proof, 
\[
N_{\mathfrak{a}}(f+g)=N_{\mathfrak{a}}(f)+N_{\mathfrak{a}}(g),
\]
so 
\begin{align*}
N_{\sigma}(f+g) & = N_{\mathfrak{a}}(f)+N_{\mathfrak{a}}(g)+(-\sigma^{\vee}) \\
& = N_{\sigma}(f)+N_{\sigma}(g).  
\end{align*}
\end{proof}

\subsection{Newton body of a metric}

Let $X$ be a compactification of $G$, polarized by $L$.
Let $h$ be a $K\times K$-invariant hermitian metric with non negative curvature on $L$, 
and $u$ its convex potential with respect to a fixed left-equivariant trivialization of $L$ 
on $G$, 
which is a function on $\mathfrak{a}$.

\begin{defn}
We will call \emph{Newton body} of $h$ the set 
$N(h):=N_{\mathfrak{a}}(u)$.
\end{defn}

Let $P$ be the polytope corresponding to the polarization $L$.

\begin{exa}
\label{newBT}
Let $h_L$ be the
metric constructed in Section~\ref{BTmetric}.
Its convex potential $v$ is the support function of $2P$, so 
$N(h_L)=2P$, as in Example~\ref{CPLNew}.
Remark that the convex potential of $h_L$ is piecewise linear with respect 
to the opposite of the fan of the toric subvariety.
\end{exa}

\begin{prop}
The Newton body of $h$ is stable under the action of the Weyl group $W$.
\end{prop}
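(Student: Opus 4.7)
The plan is to derive the $W$-stability of $N(h)$ directly from the $W$-invariance of the convex potential $u$. Recall that $u$ was shown to be $W$-invariant in the preceding section (as a consequence of the $K\times K$-invariance of $h$ together with the Azad--Loeb theorem). Since $N(h)$ is defined purely in terms of $u$ via
\[
N(h) = \{m \in \mathfrak{a}^* ; \exists C \in \mathbb{R},\ \forall x \in \mathfrak{a},\ u(x) - m(x) \geq C\},
\]
any symmetry of $u$ should transport to a symmetry of $N(h)$.

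Concretely, I would fix $m \in N(h)$ and $w \in W$, pick a constant $C$ such that $u(x) - m(x) \geq C$ for all $x \in \mathfrak{a}$, and check that the same $C$ works for $w \cdot m$. Writing $(w \cdot m)(x) = m(w^{-1} x)$, I compute
\[
u(x) - (w\cdot m)(x) = u(w^{-1} x) - m(w^{-1} x),
\]
where the first equality uses the $W$-invariance $u(x) = u(w^{-1} x)$. Since $w^{-1} x$ ranges over all of $\mathfrak{a}$ as $x$ does, the right-hand side is bounded below by $C$, so $w \cdot m \in N(h)$. This shows $W \cdot N(h) \subset N(h)$, and applying the same argument to $w^{-1}$ gives the reverse inclusion.

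There is no real obstacle here: the statement is essentially a formal consequence of the $W$-invariance of $u$ transferred through the definition of the Newton set. The only thing to be careful about is the convention for the $W$-action on $\mathfrak{a}^*$ (dual to the action on $\mathfrak{a}$), but with the standard convention the computation above is immediate.
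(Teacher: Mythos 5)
Your argument is correct and is essentially the paper's own proof: both transfer the $W$-invariance of the convex potential $u$ through the defining inequality of the Newton set, using the dual action $(w\cdot m)(x)=m(w^{-1}x)$ and the fact that $w^{-1}x$ ranges over all of $\mathfrak{a}$. Nothing further is needed.
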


\begin{proof}
Let $u$ be the convex potential of $h$, and let $m\in \mathfrak{a}^*$. 
Suppose that 
\[
u(x)-m(x)\geq C
\]
for some constant $C$ and for all $x\in \mathfrak{a}$.
Let $w\in W$.
By $W$-invariance of $u$, the inequality is equivalent to 
\begin{align*}
C & \leq  u(w\cdot x)-m(x)\\
& \leq u(w\cdot x)-w^{-1}\cdot m(w\cdot x).
\end{align*}

Since $w$ induces a bijection of $\mathfrak{a}$, we get that 
for all $w\in W$, $m\in N(h)$ if and only if $w\cdot m\in N(h)$,
which means that $N(h)$ is $W$-invariant.
\end{proof}

We can finally translate the information about the asymptotic behavior of metrics in terms 
of their Newton bodies.

\begin{prop}
Let $h$ be a $K\times K$-invariant hermitian metric with non negative curvature on $L$.
Then $N(h)\subset 2P$. If in addition $h$ is locally bounded, then $N(h)=2P$. 
\end{prop}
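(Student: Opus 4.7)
The plan is to reduce the statement directly to the asymptotic comparison provided by Theorem~\ref{Asbe} together with the monotonicity of the Newton-set construction recorded in Proposition~\ref{easyprop}, using Example~\ref{newBT} to identify $N_{\mathfrak{a}}(v)$ with $2P$.

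First I would note that, by Example~\ref{newBT}, the Newton body of the reference metric $h_L$ equals $2P$; equivalently, if $v$ denotes the support function of $2P$, then $N_{\mathfrak{a}}(v) = 2P$. (This is the content of Example~\ref{CPLNew} applied to $\sigma=\mathfrak{a}$, whose dual cone is $\{0\}$, so that $N_{\mathfrak{a}}(v)$ is the convex hull of the vertices $v_i$, i.e.\ the polytope $2P$ itself.)

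For the inclusion $N(h)\subset 2P$, let $u$ be the convex potential of $h$. Theorem~\ref{Asbe} yields a constant $C_1$ with $u(x)\le v(x)+C_1$ on $\mathfrak{a}$. Applying the monotonicity and translation-invariance properties in Proposition~\ref{easyprop} gives
\[
N(h) = N_{\mathfrak{a}}(u) \subset N_{\mathfrak{a}}(v+C_1) = N_{\mathfrak{a}}(v) = 2P.
\]

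For the equality in the locally bounded case, Theorem~\ref{Asbe} further provides a constant $C_2$ such that $v(x)+C_2\le u(x)\le v(x)+C_1$. Then the ``sandwich'' clause of Proposition~\ref{easyprop} (or, equivalently, a second application of monotonicity in the opposite direction) yields $N_{\mathfrak{a}}(u)=N_{\mathfrak{a}}(v)=2P$, which is exactly $N(h)=2P$.

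There is no real obstacle: the proof is a bookkeeping exercise that chains together the bijection of Theorem~\ref{Asbe} with the formal properties of Newton sets. The only point worth double-checking is the identification $N_{\mathfrak{a}}(v)=2P$, where one must remember that the dual cone of the full space $\mathfrak{a}$ is trivial, so that the general formula from Example~\ref{CPLNew} reduces to the convex hull of the linear pieces of the support function, recovering $2P$.
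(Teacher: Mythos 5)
Your proof is correct and follows essentially the same route as the paper: the paper likewise quotes the two-sided (resp.\ one-sided) asymptotic bounds on the convex potential from Theorem~\ref{Asbe}, applies the monotonicity, translation-invariance and sandwich properties of Proposition~\ref{easyprop}, and identifies $N_{\mathfrak{a}}(v)=2P$ via Example~\ref{newBT}. Your only addition is to spell out the details the paper leaves as ``easily follows,'' including the correct observation that $\mathfrak{a}^{\vee}=\{0\}$ in Example~\ref{CPLNew}.
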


\begin{proof}
Recall from Section~\ref{BTmetric}
that the convex potential $u$ of a 
$K\times K$-invariant hermitian metric $h$ with non negative curvature on $L$
satisfies 
\[
u \leq v+C_2
\]
on $\mathfrak{a}$ for some constant $C_2$, 
and that if $h$ is locally bounded then we have in addition 
\[
v+C_1\leq u
\]
for some constant $C_1$.

Now the result easily follows from Proposition~\ref{easyprop} 
and Example~\ref{newBT}.
\end{proof}

\section{Log canonical thresholds}

In this section, we reduce the computation of the log canonical threshold of a 
$K\times K$-invariant non-negatively curved metric
to an integrability problem involving its convex potential, by using the $KAK$ integration 
formula. We prove an integrability criterion for exponentials of concave functions, with 
respect to the measure $J(x)dx$ appearing in the $KAK$ integration formula, and then use 
it to obtain an expression of the log canonical threshold in terms of the Newton body 
of the metric.

\subsection{Log canonical thresholds on compact manifolds}

In this subsection we consider first $X$ a compact complex manifold that is not necessarily 
a group compactification, and $L$ a line bundle on $X$. 

\begin{defn}
Let $x$ be a point in $X$, and $h$ a hermitian metric on $L$.
The \emph{complex singularity exponent} (or \emph{local log canonical threshold})
of $h$ at $x$, which we denote by $\mathrm{lct}(h,x)$ 
is the supremum of all $c>0$ such that $e^{-c\varphi}$ is integrable 
with respect to Lebesgue measure in a neighborhood of $x$, where
$\varphi$ is the potential of $h$ with respect to a trivialization $s$ of $L$
in a neighborhood of $x$.
\end{defn}

\begin{rem}
If $h$ is a locally bounded metric then on a sufficiently small neighborhood of 
any point, the potential $\varphi$ is a bounded function, so it is integrable.
It means that for any such metric, $\mathrm{lct}(h,x)=\infty$ at any point $x$.
\end{rem}

\begin{defn}
Let $h$ be a hermitian metric on $L$, then the \emph{log canonical threshold} of 
$h$ is defined as 
\[
\mathrm{lct}(h)=\mathrm{inf}_{x\in X}(\mathrm{lct}(h,x)).
\]
\end{defn}

\begin{prop}
\label{lctcompact}
Let $h$ be a singular hermitian metric on $L$, $h_0$ a locally bounded hermitian metric  
on $L$, and $\psi$ the potential of $h$ with respect to $h_0$.   
Let also $dV$ be any smooth volume form on $X$.
Then we have 
\[
\mathrm{lct}(h)=\mathrm{sup}\left\{ c>0;\int_Xe^{-c\psi}dV< \infty \right\}.
\]
\end{prop}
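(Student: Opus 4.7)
The plan is to localize $\int_X e^{-c\psi}\,dV$ via a finite cover, exploit the fact that $\psi$ differs from any local potential $\phi$ of $h$ only by a local potential $\phi_0$ of $h_0$, which is locally bounded by hypothesis, and then match the global integrability condition with the pointwise one in the definition of $\mathrm{lct}(h)$.

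First I would fix a finite open cover $\{U_i\}_{i=1}^N$ of $X$ by relatively compact coordinate charts on each of which $L$ admits a trivialization $s_i$. Writing $\phi_i$ and $\phi_{0,i}$ for the potentials of $h$ and $h_0$ with respect to $s_i$, one has $\psi=\phi_i-\phi_{0,i}$ on $U_i$. Since $h_0$ is locally bounded, after shrinking we may assume $\phi_{0,i}$ is bounded on $U_i$; similarly, in local coordinates on $U_i$ the smooth form $dV$ equals $\rho_i\,d\lambda$ for $\rho_i$ a smooth positive function bounded between two positive constants on $\overline{U_i}$. Combining these observations yields constants $0<A_i\leq B_i$ such that
\[
A_i\int_{U_i}e^{-c\phi_i}\,d\lambda\ \leq\ \int_{U_i}e^{-c\psi}\,dV\ \leq\ B_i\int_{U_i}e^{-c\phi_i}\,d\lambda
\]
for every $c>0$.

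From here the two inclusions are immediate. Let $c_*$ denote the supremum on the right-hand side of the statement. If $c<\mathrm{lct}(h)$, then for each $x\in X$ we have $c<\mathrm{lct}(h,x)$, so $e^{-c\phi}$ is Lebesgue-integrable on some neighborhood of $x$; by compactness of $X$ we may pass to a finite subcover, and summing the right-hand bound above over this subcover gives $\int_X e^{-c\psi}\,dV<\infty$, so $c\leq c_*$. Conversely, if $\int_X e^{-c\psi}\,dV<\infty$, then for each $x$ and each chart $U_i\ni x$ the left-hand inequality above yields $\int_{U_i}e^{-c\phi_i}\,d\lambda<\infty$, so $\mathrm{lct}(h,x)\geq c$; taking the infimum over $x\in X$ gives $\mathrm{lct}(h)\geq c$, and then taking the supremum over admissible $c$ yields $\mathrm{lct}(h)\geq c_*$.

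There is no substantive obstacle here: the content of the statement is simply that the pointwise log canonical threshold globalizes under compactness, and the proof amounts to bookkeeping the local-to-global conversions, each of which (change of trivialization, passage from $h$ to $h_0$, replacement of $dV$ by Lebesgue measure) contributes only a locally bounded multiplicative factor to the integrand. The only minor care is in choosing the cover so that all three bounded factors are controlled simultaneously.
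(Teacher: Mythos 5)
Your proof is correct and follows essentially the same route as the paper: localize, use that the local potentials of $h_0$ are bounded and that $dV$ is comparable to Lebesgue measure on charts, and then globalize by compactness. The only cosmetic slip is in the forward direction, where the neighborhoods on which $e^{-c\phi}$ is integrable need not be among your fixed charts $U_i$, so you should take the finite subcover from those (possibly smaller) neighborhoods and apply the same two-sided comparison there; this changes nothing of substance.
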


\begin{proof}
Let $x$ be any point in $X$, and $s$ a trivialization of $L$ on a neighborhood $U$ of $x$. 
Up to shrinking $U$, we can assume that the local potential $\varphi_0$ 
of $h_0$ with respect to $s$ is bounded.

Let $\varphi$ be the local potential of $h$ with respect to $s$ and $\psi$ the potential 
of $h$ with respect to $h_0$. Then by definition of $\psi$, 
we have $\psi=\varphi-\varphi_0$ on $U$,
and since $\varphi_0$ is bounded, the integrability of $e^{-c\varphi}$ with respect to 
Lebesgue measure on a 
neighborhood of $x$ is equivalent to the integrability of $e^{-c\psi}$ on the same 
neighborhood.

Furthermore, in the neighborhood of any point $x$ in $X$, the integrability with respect
to Lebesgue measure is equivalent to integrability with respect to a smooth volume form.

The function $\psi$ is defined everywhere on $X$, $e^{-c\psi}$ is positive, and $X$ is 
compact, so 
$e^{-c\psi}$ is integrable with respect to $dV$ in the neighborhood of any point in $X$
if and only if $\int_X e^{-c\psi}dV<\infty$.

Take $0<c<\mathrm{lct}(h)$, then $c<\mathrm{lct}(h,x)$ for all $x\in X$, 
so $\int_X e^{-c\psi}dV<\infty$. This means that 
\[
\mathrm{lct}(h) \leq \mathrm{sup}\left\{ c>0;\int_Xe^{-c\psi}dV< \infty \right\}.
\]

Conversely, if $c>\mathrm{lct}(h)$ then there exists $x\in X$ such that 
$c>\mathrm{lct}(h,x)$ but then $\int_X e^{-c\psi}dV=\infty$, so 
$c\geq \mathrm{sup}\left\{ c>0;\int_Xe^{-c\psi}dV< \infty \right\}$.
Taking the infimum gives the other inequality:
\[
\mathrm{lct}(h) \geq \mathrm{sup}\left\{ c>0;\int_Xe^{-c\psi}dV< \infty \right\}.
\]
This proves the proposition.
\end{proof}

\subsection{Log canonical thresholds on group compactifications}

Let $X$ be a Fano compactification of $G$. Let $L$ be an ample linearized line bundle on $X$.
Using Proposition~\ref{lctcompact}, we reduce the computation of log canonical thresholds to 
integrability conditions on the potentials of metrics, with respect to a smooth volume form. 
Since volume forms do not put weight on the (codimension one) boundary $X\setminus G$, 
we will restrict to integrability conditions on $G$. We want to use, in addition, the 
$KAK$ integration formula, that we recall here : 

\begin{prop}\cite[Proposition 5.28]{Kna02}
Let $dg$ denote a Haar measure on $G$, and $dx$ the Lebesgue measure 
on $\mathfrak{a}$, normalized by the lattice of one parameter subgroups $N$.
Then there exists a constant $C>0$ such that for any 
$K\times K$-invariant, $dg$-integrable function on $G$, 
\[
\int_G f(g)dg= C\int_{\mathfrak{a}^+}J(x)f(\exp(x))dx,
\]
where 
\[
J(x)=\prod_{\alpha \in \Phi^+} \mathrm{sinh}(\alpha(x))^2.
\]
\end{prop}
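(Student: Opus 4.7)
The plan is to realize the right-hand side as the pushforward, via the $KAK$ map, of a Haar measure on the source, and to compute the Jacobian explicitly using the root space decomposition. Consider the smooth map
\[
\Psi : K \times \mathfrak{a}^+ \times K \longrightarrow G, \qquad (k_1, x, k_2) \longmapsto k_1 \exp(x) k_2 .
\]
By the $KAK$ decomposition already stated, $\Psi$ is surjective onto $G$. Its generic fibres are orbits of the diagonal-style action of $M := Z_K(\mathfrak{a})$ given by $(k_1, x, k_2) \mapsto (k_1 m^{-1}, x, m k_2)$; singular fibres occur exactly where $x$ lies on a wall of $\mathfrak{a}^+$, and this locus has measure zero, so it can be discarded.

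Next I would compute $d\Psi$ at a regular point $(e, x, e)$. Differentiating $t \mapsto e^{tX_1} \exp(x + tH) e^{tX_2}$ at $t=0$ and right-trivializing by $\exp(x)^{-1}$ yields the $\mathbb{R}$-linear map
\[
\mathfrak{k} \oplus \mathfrak{a} \oplus \mathfrak{k} \longrightarrow \mathfrak{g}, \qquad (X_1, H, X_2) \longmapsto \operatorname{Ad}(\exp(-x)) X_1 + H + X_2 .
\]
Since $\operatorname{Ad}(\exp(-x))$ acts trivially on $\mathfrak{s} = \mathfrak{m}$, the kernel is the expected $r$-dimensional subspace $\{(Y,0,-Y) : Y \in \mathfrak{s}\}$; modding out recovers the $M$-ambiguity in the fibres.

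The main computation is the determinant. Decompose $\mathfrak{g} = \mathfrak{t} \oplus \bigoplus_{\alpha \in \Phi^+}(\mathfrak{g}_\alpha \oplus \mathfrak{g}_{-\alpha})$ and $\mathfrak{k} = \mathfrak{s} \oplus \bigoplus_{\alpha \in \Phi^+} \mathfrak{k}_\alpha$, with $\mathfrak{k}_\alpha$ the real span of $E_\alpha - E_{-\alpha}$ and $i(E_\alpha + E_{-\alpha})$. The map respects this decomposition root by root: the $\mathfrak{a}$-factor contributes $H \mapsto H$, the $\mathfrak{s}$-factor of the second $\mathfrak{k}$ is the identity onto $\mathfrak{s}$, and on each 4-real-dimensional block $\mathfrak{k}_\alpha \oplus \mathfrak{k}_\alpha \to \mathfrak{g}_\alpha \oplus \mathfrak{g}_{-\alpha}$ one reads off the matrix
\[
\begin{pmatrix} e^{-\alpha(x)} & 0 & 1 & 0 \\ 0 & e^{-\alpha(x)} & 0 & 1 \\ -e^{\alpha(x)} & 0 & -1 & 0 \\ 0 & e^{\alpha(x)} & 0 & 1 \end{pmatrix},
\]
whose determinant is $-(e^{\alpha(x)} - e^{-\alpha(x)})^2 = -4\sinh^2(\alpha(x))$. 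The Jacobian determinant of $\Psi$ is therefore, up to a sign and a factor $4^{|\Phi^+|}$, the function $J(x) = \prod_{\alpha \in \Phi^+} \sinh^2(\alpha(x))$.

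Finally I would invoke the change-of-variables formula. For any $K\times K$-invariant $f$ on $G$, integration over $\mathfrak{a}^+$ in the source variable reduces the $k_1,k_2$ integrals to a constant equal to the volume of $K \times (K/M)$ times $4^{|\Phi^+|}$; combining this with the Haar normalisation gives exactly
\[
\int_G f(g)\,dg = C \int_{\mathfrak{a}^+} J(x)\, f(\exp(x))\,dx,
\]
for a suitable $C>0$ depending only on the chosen normalisations. The main technical difficulty is the Jacobian block computation at the roots; once that is in hand, the surjectivity and measure-zero issues at the walls are handled by standard Lie-theoretic arguments.
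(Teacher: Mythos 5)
Your proposal is correct, but note that the paper does not actually prove this statement: it is quoted verbatim from Knapp's book (Proposition 5.28 there), so the only ``proof'' in the paper is the citation. What you give is essentially the standard argument lying behind that citation, namely the Jacobian computation for the Cartan map $\Psi\colon K\times\mathfrak{a}^+\times K\to G$, specialized to a complex reductive group, where each restricted root space has real dimension two; your per-root block determinant $-(e^{\alpha(x)}-e^{-\alpha(x)})^2=-4\sinh^2(\alpha(x))$ is exactly the source of the exponent $2$ in $J(x)=\prod_{\alpha\in\Phi^+}\sinh^2(\alpha(x))$, and the identity action on $\mathfrak{s}\oplus\mathfrak{a}$ together with the kernel $\{(Y,0,-Y):Y\in\mathfrak{s}\}$ correctly accounts for the $M=Z_K(\mathfrak{a})=S$ ambiguity in the fibres. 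Two small points to tighten: the map $(X_1,H,X_2)\mapsto\mathrm{Ad}(\exp(-x))X_1+H+X_2$ is the \emph{left}-trivialized differential (translation by $\exp(-x)$ on the left), not the right-trivialized one---the absolute determinant is the same either way, but the wording should match the computation; and since you compute $d\Psi$ only at points $(e,x,e)$, you should invoke unimodularity of $G$ (bi-invariance of $dg$) together with the equivariance $\Psi(k_1k,x,k'k_2)=k_1\Psi(k,x,k')k_2$ to conclude that the Jacobian is independent of $(k_1,k_2)$, and then run the change of variables through the diffeomorphism $K/M\times\mathrm{Int}(\mathfrak{a}^+)\times K\to G^{\mathrm{reg}}$ onto the open dense set of regular elements (or integrate over the compact $M$-fibres), absorbing $\mathrm{vol}(M)$, the factor $4^{\mathrm{Card}(\Phi^+)}$ and the lattice normalization of $dx$ into the constant $C$. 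With these routine adjustments your argument is complete and proves exactly what the proposition asserts, so it is a legitimate self-contained substitute for the reference.
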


Let us now derive an integrability criterion with respect to $J$.

\subsection{Integrability criterion}

\subsubsection{Integrability criterion on a cone}

We use the following proposition, obtained by Guenancia in \cite{Gue12}.
It is an analytic proof and generalization of the computation by Howald of the 
log canonical thresholds of monomial ideals. The statement given here is 
slightly different from the statement in \cite{Gue12}, but is in fact equivalent
(see \cite{Del14} for details).

\begin{prop} \cite[Proposition 1.9]{Gue12} \label{Gue12}
Let $f$ be a convex function on $\mathfrak{a}$. 
Assume that $\sigma$ is a smooth polyhedral cone in $\mathfrak{a} = N\otimes \mathbb{R}$.
Then $e^{-f}$ is integrable
on a translate (equivalently on all translates) of $\sigma$ if and only if 0 is in 
the interior of the Newton body of $f$: $0\in \mathrm{Int}(N_{\sigma}(f))$.
\end{prop}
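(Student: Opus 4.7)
My plan is first to reduce to the standard orthant $\sigma = \mathbb{R}^n_{\geq 0}$ via a unimodular change of basis of $N$ (possible since $\sigma$ is smooth polyhedral), which preserves Lebesgue measure and transports the Newton set by the dual change of coordinates. The equivalence between integrability on different translates of $\sigma$ is a separate technical point, handled by comparing integrals on nested translates and using local boundedness of convex functions with full domain. For the easy implication ($\Leftarrow$), I exploit that $N_\sigma(f)$ is stable under subtraction of elements of $\sigma^\vee$. Thus $0 \in \mathrm{Int}(N_\sigma(f))$ forces a covector $m_\epsilon = \epsilon(e_1^* + \cdots + e_n^*)$ with some $\epsilon > 0$ to lie in $N_\sigma(f)$, yielding the linear lower bound $f(x) \geq \epsilon(x_1 + \cdots + x_n) + C$ on $\sigma$, and hence $\int_\sigma e^{-f} \leq e^{-C}\epsilon^{-n} < \infty$.

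For the converse ($\Rightarrow$) I argue by contraposition. Assume $0 \notin \mathrm{Int}(N_\sigma(f))$. Separating $0$ from the convex set $N_\sigma(f)$ by a hyperplane, and exploiting its stability under $-\sigma^\vee$ (which forces the separating normal to lie in $(\sigma^\vee)^\vee = \sigma$), yields a nonzero vector $v \in \sigma$ with $m(v) \leq 0$ for every $m \in N_\sigma(f)$. I then identify $\sup_{m \in N_\sigma(f)} m(v)$ with the asymptotic slope $f^\infty(v) := \lim_{t \to \infty} f(tv)/t$, using the duality between affine minorants of $f$ on $\sigma$ and recession values of $f$ on $\sigma$. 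This produces $f^\infty(v) \leq 0$; since the one-variable convex function $t \mapsto f(tv)$ has nonpositive asymptotic slope, its (nondecreasing) derivative is nonpositive, so it is nonincreasing and therefore bounded above on $[0,\infty)$.

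To obtain divergence, I propagate this boundedness to an entire tube. Let $K \subset \sigma$ be a compact box and set $T = \mathbb{R}_{\geq 0} v + K$, which lies in $\sigma$ by convexity of the cone. The midpoint inequality $f(tv + u) \leq \tfrac{1}{2} f(2tv) + \tfrac{1}{2} f(2u)$ bounds $f$ uniformly on $T$: the first term is bounded by the asymptotic control along $v$, the second by local boundedness of $f$ on $\mathfrak{a}$. Therefore $e^{-f}$ is bounded below by a positive constant on $T$, whose volume is infinite, so $\int_\sigma e^{-f} = \infty$. The main obstacle in this plan is the identification $\sup_{m \in N_\sigma(f)} m(v) = f^\infty(v)$: the upper bound is immediate from the definition of the Newton set, but the lower bound requires a Hahn--Banach extension of $t v \mapsto tc$ (with $c < f^\infty(v)$) bounded above by the sublinear functional $f^\infty + \delta_\sigma$, together with the nontrivial convex-analytic fact that a convex function whose recession is nonnegative on $\sigma$ is bounded below on $\sigma$.
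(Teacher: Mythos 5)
The paper does not actually prove this proposition (it is quoted from Guenancia, \cite[Proposition 1.9]{Gue12}), so I can only assess your plan on its own terms. Its overall architecture --- reduction to the standard orthant, a linear minorant for the ``if'' direction, separation of $0$ from $N_\sigma(f)$ with the normal forced into $\sigma$, boundedness of $f$ on a tube and divergence of the integral --- is the right one. But the step you yourself flag as the main obstacle rests on a false statement: it is \emph{not} true that a finite convex function whose recession function is nonnegative on $\sigma$ is bounded below on $\sigma$. Take $\mathfrak{a}=\mathbb{R}$, $\sigma=[0,\infty)$, and $g(t)=-\ln(1+t)$ for $t\geq 0$, $g(t)=-t$ for $t<0$: $g$ is finite and convex on $\mathbb{R}$, its asymptotic slope $g^\infty(1)=\lim_{t\to\infty}g(t)/t=0$ is nonnegative on $\sigma$, yet $g$ is unbounded below on $\sigma$ (and indeed $0\notin\operatorname{Int}N_\sigma(g)=(-\infty,0)$ while $\int_0^\infty e^{-g}=\infty$, consistently with the proposition). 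Consequently your Hahn--Banach extension $m\leq f^\infty+\delta_\sigma$ with $m(v)=c$ need not lie in $N_\sigma(f)$, and the identification $\sup_{m\in N_\sigma(f)}m(v)=f^\infty(v)$ is not established by your argument. The identity itself is true: it is precisely the statement that the support function of $\operatorname{dom}\bigl((f+\delta_\sigma)^*\bigr)$ is the recession function of $f+\delta_\sigma$ \cite[Theorem 13.3]{Roc97}. Even simpler for your purposes, bypass recession functions: by Fenchel--Moreau, $f(tv)=\sup_{m\in N_\sigma(f)}\bigl[t\,m(v)-(f+\delta_\sigma)^*(m)\bigr]$ for $t\geq 0$, so if every $m\in N_\sigma(f)$ satisfies $m(v)\leq 0$ then $f(tv)\leq f(0)$ for all $t\geq 0$, which is all your tube argument needs. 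Alternatively, keep Hahn--Banach but replace $m$ by $m-\epsilon q$ with $q\in\operatorname{Int}\sigma^{\vee}$: the recession of $f-m+\epsilon q$ is then strictly positive on $\sigma\setminus\{0\}$, and the \emph{strict} version of your fact is true (by a compactness argument on the unit sphere of $\sigma$), while $m(v)-\epsilon q(v)>0$ still holds if you start from $c=\tfrac12 f^\infty(v)>0$.

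A secondary, easily repaired point: the equivalence ``integrable on one translate iff on all translates'' does not follow from comparing nested translates plus local boundedness, since the difference of two nested translates of $\sigma$ is unbounded. The clean route is to note $N_{a+\sigma}(f)=N_\sigma(f)$: if $f-m\geq C$ on $\sigma$ and $x\in a+\sigma$, then $\tfrac12(x-a)=\tfrac12 x+\tfrac12(-a)\in\sigma$, so convexity gives $(f-m)(x)\geq 2C-(f-m)(-a)$; then run both of your implications on an arbitrary translate (the linear minorant for convergence, a tube based at a compact box inside $a+\sigma$ for divergence). With these two corrections your plan becomes a complete proof along the same lines as the cited one.
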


\subsubsection{Integrability with respect to J}

The half sum of the positive roots of $\Phi$ is denoted by $\rho$. 
We want to prove the following integrability criterion, 
with respect to the measure $J(x)dx$.

\begin{prop}
 \label{criterionJ}
Assume that $\mathfrak{a}^+=\bigcup_i \sigma_i$ where each 
$\sigma_i$ is a smooth polyhedral cone of full dimension $r$.
Let $l$ be a function on $\mathfrak{a}$, convex on each cone $\sigma_i$.
Then 
\[
\int_{\mathfrak{a}^+}e^{-l(x)}J(x)dx < +\infty
\]
if and only if 
$4\rho \in \mathrm{Int}(N_{\mathfrak{a}^+}(l))$.
\end{prop}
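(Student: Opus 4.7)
The key observation is that on $\mathfrak{a}^+$, where $\alpha(x)\geq 0$ for every positive root, the density $J(x)=\prod_{\alpha\in\Phi^+}\sinh(\alpha(x))^2$ is comparable to $e^{4\rho(x)}$: since $2\rho=\sum_{\alpha\in\Phi^+}\alpha$, one has $\prod_{\alpha}e^{2\alpha(x)}=e^{4\rho(x)}$. Using the elementary estimates $\sinh(t)^2\leq e^{2t}/4$ for $t\geq 0$ and $\sinh(t)^2\geq c\,e^{2t}$ for $t\geq 1$, I will bound $J$ from above on all of $\mathfrak{a}^+$ and from below on a translate $y_0+\mathfrak{a}^+$, where $y_0\in\mathfrak{a}^+$ is chosen so that $\alpha(y_0)\geq 1$ for every $\alpha\in\Phi^+$. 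This will reduce the integrability of $e^{-l}J$ to integrability of $e^{-(l-4\rho)}$ on the cones $\sigma_i$, where Proposition~\ref{Gue12} applies: note that $l-4\rho$ is convex on each $\sigma_i$ since $4\rho$ is linear, and Newton sets translate as $N_{\sigma_i}(l-4\rho)=N_{\sigma_i}(l)-4\rho$, so the condition $0\in\mathrm{Int}(N_{\sigma_i}(l-4\rho))$ is equivalent to $4\rho\in\mathrm{Int}(N_{\sigma_i}(l))$.

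For the "if" direction, I will assume $4\rho\in\mathrm{Int}(N_{\mathfrak{a}^+}(l))$. By Proposition~\ref{easyprop}, $N_{\mathfrak{a}^+}(l)=\bigcap_i N_{\sigma_i}(l)$, and since the interior operation commutes with finite intersections of convex sets, $4\rho\in\mathrm{Int}(N_{\sigma_i}(l))$ for every $i$. Proposition~\ref{Gue12} then ensures that $e^{-(l-4\rho)}$ is integrable on every translate of $\sigma_i$, hence on $\sigma_i$ itself (the bounded piece near the apex contributes a finite amount by continuity of $l$ on the interior). Summing over $i$ and using the upper bound $J\leq C\,e^{4\rho}$ gives $\int_{\mathfrak{a}^+}e^{-l}J\,dx<+\infty$.

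Conversely, if $4\rho\notin\mathrm{Int}(N_{\mathfrak{a}^+}(l))$, there is some $i_0$ with $4\rho\notin\mathrm{Int}(N_{\sigma_{i_0}}(l))$, i.e.\ $0\notin\mathrm{Int}(N_{\sigma_{i_0}}(l-4\rho))$. Proposition~\ref{Gue12} yields $\int_{y_0+\sigma_{i_0}}e^{-(l-4\rho)}\,dx=+\infty$, and combined with the lower bound $J\geq c'\,e^{4\rho}$ on $y_0+\mathfrak{a}^+\supset y_0+\sigma_{i_0}$, this forces $\int_{\mathfrak{a}^+}e^{-l}J\,dx=+\infty$. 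The main technical point will be reconciling Guenancia's proposition, stated for functions convex on all of $\mathfrak{a}$, with our setting where $l$ is only piecewise convex on the $\sigma_i$: one can either extend $l|_{\sigma_i}$ to a convex function on $\mathfrak{a}$ (for instance by its convex envelope, set to $+\infty$ outside $\sigma_i$) before invoking Proposition~\ref{Gue12}, or verify that the argument in \cite{Gue12} only uses convexity on the cone in question. The rest amounts to checking that the two-sided $\sinh$ estimates indeed transport the Guenancia criterion for $l-4\rho$ into the criterion for $l$ with respect to the measure $J(x)\,dx$.
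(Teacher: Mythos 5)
Your argument is correct and follows essentially the same route as the paper's proof: the two-sided comparison of $J$ with $e^{4\rho}$ (upper bound on all of $\mathfrak{a}^+$, lower bound on a translate where all positive roots are bounded below), Guenancia's criterion (Proposition~\ref{Gue12}) applied to $l-4\rho$ cone by cone, the translation rule for Newton sets, and the gluing over the fan via $N_{\mathfrak{a}^+}(l)=\bigcap_i N_{\sigma_i}(l)$. Two minor points: the parenthetical justification for passing from translates of $\sigma_i$ to $\sigma_i$ itself is off (the set $\sigma_i\setminus(y_0+\sigma_i)$ is unbounded, not a bounded piece near the apex) --- instead, as the paper does, read Proposition~\ref{Gue12} as asserting the equivalence of integrability over all translates of $\sigma_i$, including $\sigma_i$ itself; and of your two fixes for the convexity hypothesis, only the second is really viable, since a convex function on a closed cone need not extend to a finite convex function on all of $\mathfrak{a}$, while the Newton set $N_{\sigma_i}(l-4\rho)$ only involves values on $\sigma_i$ and Guenancia's argument only needs convexity there.
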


\begin{lem}
Let $\sigma$ be a smooth full dimensional polyhedral cone in $\mathfrak{a}^+$, 
$l$ be a convex function on $\mathfrak{a}$,
then the following are equivalent:
\begin{itemize}
\item $\int_{\sigma}e^{-l(x)}J(x)dx<\infty$; 
\item $\int_{\sigma}e^{-l(x)+4\rho(x)}dx<\infty$;
\item $4\rho \in \mathrm{Int}(N_{\sigma}(l))$.
\end{itemize}
\end{lem}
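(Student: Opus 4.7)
The plan is to deduce the three equivalences from Proposition~\ref{Gue12} applied to the convex function $l-4\rho$, by comparing the density $J(x)$ with $e^{4\rho(x)}$ on the cone $\sigma$.

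First, for $(2)\Leftrightarrow(3)$, the function $l-4\rho$ is convex as the sum of a convex function and a linear one. A direct check from the definition of $N_\sigma$ shows that translating a function by a linear functional $\ell$ simply translates the Newton set: $N_\sigma(l-4\rho)=N_\sigma(l)-4\rho$. Combined with Proposition~\ref{Gue12}, this gives
\[
\int_\sigma e^{-l(x)+4\rho(x)}\,dx < \infty \ \Longleftrightarrow\ 0\in\mathrm{Int}(N_\sigma(l-4\rho)) \ \Longleftrightarrow\ 4\rho\in\mathrm{Int}(N_\sigma(l)).
\]
For $(2)\Rightarrow(1)$, since $\sigma\subset\mathfrak{a}^+$, every positive root satisfies $\alpha(x)\geq 0$ on $\sigma$, so the elementary bound $\sinh(t)\leq e^{t}/2$ for $t\geq 0$ gives $J(x)\leq 4^{-|\Phi^+|}e^{4\rho(x)}$ pointwise on $\sigma$, and integrating against $e^{-l(x)}$ proves the implication.

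The delicate direction is $(1)\Rightarrow(3)$, because $J$ can be arbitrarily smaller than $e^{4\rho}$ near the walls of $\mathfrak{a}^+$, so a direct two-sided comparison on all of $\sigma$ is impossible. The idea is to restrict to a translated subcone avoiding these walls. Since $\sigma$ is full-dimensional, its interior meets the open chamber $\mathrm{Int}(\mathfrak{a}^+)$, and by rescaling any interior point one can find $x_0\in\sigma$ with $\alpha(x_0)\geq 1$ for every $\alpha\in\Phi^+$. Setting $\sigma':=x_0+\sigma\subset\sigma$, every $x\in\sigma'$ satisfies $\alpha(x)\geq 1$, whence $\sinh(\alpha(x))\geq\tfrac{1-e^{-2}}{2}e^{\alpha(x)}$ and therefore $J(x)\geq c\,e^{4\rho(x)}$ on $\sigma'$ for an explicit positive constant $c$. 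Hypothesis $(1)$ then yields
\[
\int_{\sigma'} e^{-l(x)+4\rho(x)}\,dx \leq c^{-1}\int_\sigma e^{-l(x)}J(x)\,dx < \infty,
\]
and the ``equivalently on all translates'' clause of Proposition~\ref{Gue12} applied to $l-4\rho$ delivers $0\in\mathrm{Int}(N_\sigma(l-4\rho))$, that is $(3)$. The main obstacle is exactly this degeneration of $J$ on the walls of $\mathfrak{a}^+$; the reason the strategy works is that the Newton body $N_\sigma$ is insensitive to translations of the cone, so one may pass from $\sigma$ to the better-behaved subcone $\sigma'$ without losing any Newton-set information.
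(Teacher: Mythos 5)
Your proof is correct and follows essentially the same route as the paper: the two-sided comparison of $J(x)$ with $e^{4\rho(x)}$ (upper bound on all of $\mathfrak{a}^+$, lower bound on a translated subcone avoiding the walls), combined with Proposition~\ref{Gue12} applied to $l-4\rho$ and its ``on a translate, equivalently on all translates'' clause. The only difference is cosmetic: you argue $(1)\Rightarrow(3)$ directly on the subcone $x_0+\sigma$, whereas the paper runs the same estimate in contrapositive form ($\neg(2)\Rightarrow\neg(1)$) with $\gamma+\sigma$.
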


\begin{proof}
Writing 
\[
\mathrm{sinh}(\alpha(x))=\frac{e^{\alpha(x)}-e^{-\alpha(x)}}{2}
=\frac{1}{2}e^{\alpha(x)}(1-e^{-2\alpha(x)}),
\]
we get that 
\[
J(x)=\frac{1}{2^{2\mathrm{Card}(\Phi^+)}}e^{2\sum_{\alpha \in \Phi^+}\alpha(x)}
\prod_{\alpha \in \Phi^+}(1-e^{-2\alpha(x)})^2.
\]

For any $x\in \mathfrak{a}^+$ and $\alpha \in \Phi^+$, $\alpha(x)> 0$, so 
$0\leq e^{-2\alpha(x)}< 1$.
This implies $0< \prod_{\alpha \in \Phi^+}(1-e^{-2\alpha(x)})^2 \leq 1$, so 
\[
0< J(x) \leq \frac{1}{2^{2\mathrm{Card}(\Phi^+)}}e^{4\rho(x)}.
\]

This first inequality allows to say that if $\int_{\sigma}e^{-l(x)+4\rho(x)}dx<\infty$
then 
\[
\int_{\sigma}e^{-l(x)}J(x)dx<\infty.
\]

Let us now prove the converse. 
Choose $\gamma$ a point in the interior of $\sigma$.
 Assume that $e^{-l+4\rho}$ is not integrable on $\sigma$.
Then by the usual integrability criterion (Proposition~\ref{Gue12})
$e^{-l+4\rho}$ is also non integrable on $\gamma+\sigma$.

But now, for $x\in \gamma + \mathfrak{a}^+$ and $\alpha\in \Phi^+$, we have 
$\alpha(x)\geq c=\mathrm{min}_{\beta\in \Phi^+}\beta(\gamma)>0$, so 
$0\leq e^{-2\alpha(x)}\leq e^{-2c}<1$, and this implies 
\[
\left(\frac{1-e^{-2c}}{2}\right)^{2\mathrm{Card}(\Phi^+)}e^{4\rho(x)}\leq J(x)
\leq \frac{1}{2^{2\mathrm{Card}(\Phi^+)}}e^{4\rho(x)}.
\]

This gives that 
\begin{align*}
\int_{\sigma}e^{-l(x)}J(x)dx & \geq \int_{\gamma + \sigma}e^{-l(x)}J(x)dx \\
& \geq \int_{\gamma + \sigma}e^{-l + 4 \rho}dx \\
& \geq \infty.
\end{align*}
 
We have then shown the equivalence of the two first points in the lemma. 
By Proposition~\ref{Gue12}
the second point is also equivalent to 
\[
0\in \mathrm{Int}(N_{\sigma}(l-4\rho))=-4\rho+\mathrm{Int}(N_{\sigma}(l)).
\]
Letting $4\rho$ go to the left, we conclude the proof.
\end{proof}

Now we can prove the proposition, just by gluing the parts.

\begin{proof}
Just remark that since the function $e^{-l(x)}J(x)$ is positive and the cones are
full dimensional, $\int_{\mathfrak{a}^+}e^{-l(x)}J(x)dx < +\infty$ if 
and only if 
$\int_{\sigma_i}e^{-l(x)}J(x)dx < +\infty$ for all $i$.

For each of these integrals we can use the lemma, so the necessary and sufficient 
condition becomes 
$4\rho \in \mathrm{Int}(N_{\sigma_i}(l))$ for all $i$, or equivalently 
$4\rho \in \mathrm{Int}(\bigcap_i N_{\sigma_i}(l))$.

To conclude, observe that $N_{\mathfrak{a}^+}(l)=\bigcap_i N_{\sigma_i}(l)$
by Proposition~\ref{easyprop}.
\end{proof}

\subsection{Log canonical thresholds in terms of Newton bodies}

Let $X$ be a Fano compactification of $G$. 
Let $L$ be a 
linearized ample line bundle on $X$, whose associated polytope
is $P$. Denote by $Q$ the polytope associated to the anticanonical bundle $-K_X$. 
Let also $H$ denote the convex hull of all images of $2\rho$ by the Weyl group $W$.

We want to prove the following.
\begin{thm}
\label{exprlct}
Let $h$ be a $K\times K$-invariant hermitian metric with non negative curvature on $L$, then 
\[
\mathrm{lct}(h)=\mathrm{sup}\{c>0; 2H+2cP \subset cN(h)+2Q\}.
\]
\end{thm}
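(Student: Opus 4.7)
My plan is to apply Proposition~\ref{lctcompact} to reduce $\mathrm{lct}(h)$ to the supremum of $c>0$ for which $\int_X e^{-c\psi}dV$ is finite, where $\psi$ is the global potential of $h$ with respect to a locally bounded reference metric and $dV$ is any smooth volume form. I would take as reference the metric $h_L$ of Section~\ref{BTmetric}, whose convex potential is the support function $v$ of $2P$, so that $\psi(\exp x)=u(x)-v(x)$ on $\mathfrak{a}^+$. For $dV$ I choose the volume form associated to a smooth positive $K\times K$-invariant hermitian metric $h_{-K}$ on $-K_X$ (such a metric exists since $X$ is Fano); by Theorem~\ref{Asbe} applied to $-K_X$, its convex potential $u_{-K}$ differs from the support function $v_Q$ of $2Q$ by a bounded function.

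The next step passes from $X$ to $\mathfrak{a}^+$. The ratio $dV/dg$ against Haar measure equals $h_{-K}(\omega_0,\bar\omega_0)=e^{-\phi_{-K}}$ for a left-invariant trivialization $\omega_0$ of $-K_X|_G$, so the $K\times K$-invariance of the integrand and the $KAK$ integration formula yield
\[
\int_X e^{-c\psi}dV = C\int_{\mathfrak{a}^+}e^{-(cu-cv+u_{-K})}J(x)\,dx.
\]
Boundedness of $u_{-K}-v_Q$ makes this finite exactly when $\int_{\mathfrak{a}^+}e^{-l_c}J(x)\,dx$ is, with $l_c:=cu-cv+v_Q$.

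Refining the fan of the toric subvariety (intersected with $\mathfrak{a}^+$) to a smooth polyhedral decomposition on whose cones $\sigma$ both $v$ and $v_Q$ are linear, $l_c$ is convex on each cone, so Proposition~\ref{criterionJ} gives finiteness iff $4\rho\in\mathrm{Int}(N_{\mathfrak{a}^+}(l_c))$. Using Proposition~\ref{easyprop}, on each cone $N_\sigma(l_c)=cN(h)+(-\sigma^\vee)-cv_\sigma+v_{Q,\sigma}$, and passing to support functions the interior containment translates into the strict pointwise inequality $4\rho(x)+cv(x)<cv_{N(h)}(x)+v_Q(x)$ for $x\in\mathfrak{a}^+\setminus\{0\}$, where $v_{N(h)}$ denotes the support function of $N(h)$.

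Finally, $4\rho\in\mathfrak{a}^*_+$ gives $h_{2H}(x)=4\rho(x)$ on $\mathfrak{a}^+$, and $W$-invariance of $v$, $v_Q$, $N(h)$, and $2H$ extends the inequality to $\mathfrak{a}$ as $h_{2H}(x)+cv(x)\leq cv_{N(h)}(x)+v_Q(x)$; by the support-function characterization of convex set inclusion this is exactly $2H+2cP\subset cN(h)+2Q$, and the strict-versus-closed distinction disappears after taking the supremum in $c$. I expect the main obstacle to be the bookkeeping in step three, in particular the cone-by-cone Newton set identities and their assembly into the globally stated polytope inclusion.
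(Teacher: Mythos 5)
Your proposal is correct, and it follows the same skeleton as the paper's proof: Proposition~\ref{lctcompact}, restriction of the integral to the open orbit, the $KAK$ integration formula, Proposition~\ref{criterionJ} applied cone by cone on a decomposition of $\mathfrak{a}^+$ coming from the fan of the toric subvariety, and finally $W$-invariance. The genuine difference is in the endgame. The paper keeps the smooth anticanonical potential $u_1$ as an honestly convex (non piecewise-linear) summand, absorbs only the piecewise-linear potential of the reference metric cone by cone, and then splits $N_{\mathfrak{a}}(cu+u_1)=cN(h)+2Q$ via Proposition~\ref{Newsum}, i.e.\ the infimal-convolution theorem~\ref{Rockafellar}. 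You instead linearize the anticanonical term at the outset, replacing $u_{-K}$ by the support function $v_Q$ of $2Q$ (legitimate, by Theorem~\ref{Asbe} and the bounded-perturbation invariance of Newton sets), and then convert $4\rho\in\mathrm{Int}\bigl(N_{\mathfrak{a}^+}(l_c)\bigr)$ into pointwise support-function inequalities, using $h_{2H}(x)=4\rho(x)$ on $\mathfrak{a}^+$ and $W$-invariance, so that the inclusion $2H+2cP\subset cN(h)+2Q$ comes from the support-function criterion for inclusions rather than from Proposition~\ref{Newsum}. What your route buys is a more elementary, piecewise-linear bookkeeping (no need to split the Newton set of a sum of two genuinely convex functions); what it costs is the need for a common decomposition on which both $v$ and $v_Q$ are linear (in fact no refinement is needed: both are support functions of polytopes of ample line bundles on the same toric variety $Z$, hence linear on the cones of its fan) and a closure caveat, since $N(h)$ need not be closed and support-function inequalities only detect the closed convex hull. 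Two minor points: the identity $N_{\sigma}(u)=N(h)+(-\sigma^{\vee})$ you use is the proposition following Theorem~\ref{Rockafellar}, not Proposition~\ref{easyprop}; and your remark that the strict-versus-non-strict distinction disappears in the supremum over $c$ is exactly the same silent step as the paper's passage from $\mathrm{Int}\bigl(N_{\mathfrak{a}^+}\bigr)$ to $N_{\mathfrak{a}^+}$, and both can be justified by a short scaling argument using $H\subset\mathrm{Int}(Q)$ from Remark~\ref{HinQ}.
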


We first introduce some notations.

Let us fix $s_0$ a left $G$-equivariant trivialization of $L$ on $G$ and $s_1$ a left $G$ 
equivariant trivialization of $-K_X$ on $G$. 

Let $u$ be the convex potential of $h$ with respect to the section $s_0$.
Let also $u_0$ be the support function of $P$ and $h_0$ be the corresponding
metric. 
It has locally bounded potentials.
Denote by $\psi$ the potential of $h$ with respect to $h_0$.

Since $X$ is Fano, we can choose $h_1$ a smooth metric on $-K_X$ 
with positive curvature, 
and let $u_1$ be its convex potential with respect to $s_1$.
This choice determines a smooth volume form on $X$, 
which writes, on $G$, 
\[
dV=e^{-u_1}dg
\]
where $dg$ is the Haar measure $s_1^{-1}\wedge \overline{s_1^{-1}}$.     

\begin{rem}
\label{HinQ}
In particular, the integral of this volume form is finite, so applying 
the $KAK$ integration formula this means that 
\[
\int_{\mathfrak{a}^+}e^{-u_1}Jdx<\infty.
\]
By Proposition~\ref{criterionJ}, this implies that 
\[
4\rho \in \mathrm{Int}(N(h_1))=\mathrm{Int}(2Q).
\]
Another way to say that is $H \subset \mathrm{Int}(Q)$.
\end{rem}

\begin{proof}
Using Proposition~\ref{lctcompact}, then restricting to the dense orbit, we get:
\begin{align*}
\mathrm{lct}(h) & = \mathrm{sup}  \left\{ c>0; \int_Xe^{-c\psi}dV<\infty \right\}  \\
            & = \mathrm{sup} \left\{c>0; \int_G e^{-c\psi}dV<\infty \right\}. 
\end{align*}
Since $\psi(\exp(x))=u(x)-u_0(x)$, we can now use the $KAK$ integration formula to write:
\[
\mathrm{lct}(h) = \mathrm{sup} \left\{c>0; \int_{\mathfrak{a}^+} e^{-c(u-u_0)}e^{-u_1}Jdx<\infty \right\}. 
\]

Then Proposition~\ref{criterionJ} gives:
\begin{align*}
\mathrm{lct}(h) & 
	= \mathrm{sup} \left\{ c>0; 
		4\rho \in \mathrm{Int}(N_{\mathfrak{a}^+}(cu-cu_0+u_1)) \right\} \\  
            & = \mathrm{sup} \{c>0; 4\rho \in N_{\mathfrak{a}^+}(cu-cu_0+u_1) \}. 
\end{align*}  

Let $\sigma_i$ be the cones of full dimension in the 
fan subdivision of $\mathfrak{a}^+$ corresponding to $X$
(induced by the fan subdivision of $\mathfrak{a}$ associated to the toric 
subvariety $Z$). 
Then $u_0$ is 
linear on each $-\sigma_i$. We write $u_0^i$ the corresponding element 
of $\mathfrak{a}^*$. 

We have 
\begin{align*}
\mathrm{lct}(h)& = \mathrm{sup} \{c>0; \forall i,~ 4\rho \in N_{-\sigma_i}(cu-cu_0+u_1) \} \\
 & = \mathrm{sup} \{c>0; \forall i,~ 4\rho + cu_0^i \in N_{-\sigma_i}(cu+u_1) \}. \\
\intertext{Recall from Example~\ref{CPLNew} that 
$P=N_{\mathfrak{a}}(u_0)\subset u_0^i+\sigma_i^{\vee}$, so that }
\mathrm{lct}(h)& = \mathrm{sup} \{c>0; \forall i,~ 4\rho + cP \in N_{-\sigma_i}(cu+u_1) \} \\
& = \mathrm{sup} \{c>0; 4\rho+cP \in N_{\mathfrak{a}^+}(cu+u_1) \} \\
            & = \mathrm{sup}\{c>0; 2H+2cP \subset N_{\mathfrak{a}}(cu+u_1)\} 
\end{align*}
by $W$-invariance.

To conclude it remains to remark that both $u$ and $u_1$ are convex, so 
by Proposition~\ref{Newsum},
\[
N_{\mathfrak{a}}(cu+u_1)=cN_{\mathfrak{a}}(u)+N_{\mathfrak{a}}(u_1)=cN(h)+2Q.
\]
\end{proof}

\section{Alpha invariants}

We obtain in this section an expression for the $\alpha$-invariant of a polarized 
group compactification in terms of its polytope. We first give the result for general 
reductive group compactifications, then see how it simplifies when the group is 
semisimple. We then discuss some some examples and how some additional symmetries can 
be taken into account for reductive group compactifications.

\subsection{General formula}

\begin{defn}
Let $X$ be a compact complex manifold, $K$ a compact subgroup of the automorphisms group 
of $X$, 
and $L$ a $K$-linearized line bundle on $X$. 
The \emph{alpha invariant} of $L$ relative to the group $K$, denoted by $\alpha_K(L)$ is the 
infimum of the log canonical thesholds of all $K$-invariant singular hermitian metrics on $L$ 
with 
non negative curvature.
\end{defn}

Let $P$ and $Q$ be two convex bodies in $\mathfrak{a}^*$.
Recall the definition of the \emph{Minkowski difference}: 
\[
Q\ominus P = \{x | x+P\subset Q\}.
\]
Another expression of the Minkowski difference is the following, 
which shows that it is convex if $Q$ is convex:
\[
Q \ominus P = \bigcap_{p\in P} (-p+Q).
\]
If $P_1$, $P_2$ and $Q$ are three convex bodies, then 
$P_1+Q\subset P_2$ if and only if $P_1\subset P_2\ominus Q$.

We can now state our main result.

\begin{thm}
\label{alphared}
Let $(X,L)$ be a polarized compactification of $G$, and $P:=P(X,L)$.
Assume furthermore that $X$ is Fano and let $Q:=P(X,-K_X)$.
Then 
\[
\alpha_{K\times K}(L)=\mathrm{sup} \{ c>0 ; c(P + (-P^W)) \subset Q \ominus H \},
\]
where $P^W$ denotes the subset of $W$-invariant points of $P$.
\end{thm}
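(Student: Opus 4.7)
The plan is to combine the definition $\alpha_{K\times K}(L)=\inf_h \mathrm{lct}(h)$ with Theorem~\ref{exprlct} and then reduce the resulting infimum over metrics to an infimum over the set $P^W$ of $W$-fixed points of $P$. Dividing $2H+2cP\subset cN(h)+2Q$ by $2$ and setting $A:=\tfrac12 N(h)$, this expresses
\[
\alpha_{K\times K}(L)=\inf_A \sup\{c>0;\ H+cP\subset cA+Q\},
\]
so the first task is to identify the range of $A$ as $h$ varies.

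I would argue that $A$ ranges exactly over the non-empty $W$-invariant compact convex subsets of $P$. The containment $A\subset P$ and the $W$-invariance are already established in the previous section; for the converse, given such an $A$, the support function $v_{2A}$ of $2A$ is convex and $W$-invariant, and bounded above by the support function of $2P$, so Theorem~\ref{Asbe} yields a $K\times K$-invariant non-negatively curved metric $h$ with convex potential $v_{2A}$. Legendre--Fenchel duality, as already used in the study of Newton sets, gives $N(h)=2A$.

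Next I would establish the monotonicity: if $A\subset A'$ then $cA+Q\subset cA'+Q$, so the condition $H+cP\subset cA+Q$ is easier for $A'$, and thus $\sup\{c>0;\ H+cP\subset cA+Q\}$ is non-decreasing in $A$. Hence the infimum over $A$ is attained on minimal admissible sets. Averaging over $W$ shows that every non-empty $W$-invariant compact convex subset of $P$ contains a point of $P^W$, so the minimal admissible sets are precisely the singletons $\{a\}$ with $a\in P^W$, and
\[
\alpha_{K\times K}(L)=\inf_{a\in P^W}\sup\{c>0;\ c(P-a)\subset Q\ominus H\},
\]
after rewriting $H+cP\subset ca+Q$ as $c(P-a)\subset Q\ominus H$ via the definition of the Minkowski difference.

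Finally I would swap the inf and the sup. For each $a\in P^W$, the set $\{c\geq 0;\ c(P-a)\subset Q\ominus H\}$ is a closed down-interval $[0,c_a]$, non-empty since $0\in Q\ominus H$ by Remark~\ref{HinQ}, so $\inf_{a\in P^W} c_a$ equals the largest $c$ for which $c(P-a)\subset Q\ominus H$ holds simultaneously for all $a\in P^W$, i.e.,
\[
\alpha_{K\times K}(L)=\sup\{c>0;\ c(P+(-P^W))\subset Q\ominus H\},
\]
which is the formula in the statement. The main delicate point is the realization step, namely checking that every $W$-invariant compact convex $A\subset P$, including the degenerate singleton case $A=\{a\}$ with $a\in P^W$ (corresponding to a metric with linear convex potential $u(x)=2a(x)$), does come from an admissible metric $h$; this is what allows the infimum in the definition of $\alpha_{K\times K}(L)$ to range over such a concrete family of convex subsets.
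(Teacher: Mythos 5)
Your proposal is correct and follows essentially the same route as the paper: reduce via monotonicity of the threshold in the Newton body and the $W$-fixed barycenter to the metrics with linear convex potential $x\mapsto 2a(x)$, $a\in P^W$, then perform the same inf/sup exchange. The only superfluous ingredient is the claim that Newton bodies range \emph{exactly} over all $W$-invariant compact convex subsets of $P$ (closedness of $N(h)$ is not actually needed nor established); your argument only uses the directions that do hold, namely $N(h)\subset 2P$, $W$-invariance, convexity, and the realizability of the singletons.
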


\begin{proof}
Let $h$ be any $K\times K$-invariant metric on $L$ with non negative curvature.
The Newton body of $h$ is convex 
and $W$-stable. In particular it contains a $W$-invariant point $p$, 
for example the barycenter of the orbit of any point in $N(h)$.

Denote by $h_p$ the $K\times K$-invariant metric on $L$ with non negative curvature
whose convex potential is the function $x\mapsto p(x)$.
Then $\{p\}=N(h_p) \subset N(h)$, so by the expression of the 
log canonical thresholds from Theorem~\ref{exprlct}, 
$\mathrm{lct}(h)\geq \mathrm{lct}(h_p)$.

Since all such $h_p$ for $p\in 2P^W$ define a singular 
hermitian metric with non-negative curvature, 
this remark allows to write the alpha invariant as
\[
\alpha_{K\times K}(L)=\mathrm{inf}_{p\in 2P^W} \mathrm{lct}(h_p).
\]

Now from the expression of the log canonical threshold we get 
\begin{align*}
\mathrm{lct}(h_p) & = \mathrm{sup} \{ c>0 ; 2H+2cP \subset cN(h_p) + 2Q\} \\
& = \mathrm{sup} \{ c>0 ; -cp+2cP \subset 2Q \ominus 2H \}.
\end{align*}

Then the expression of the alpha invariant further simplifies as
\begin{align*}
\alpha_{K\times K}(L) & = \mathrm{inf}_{p\in 2P^W} 
		\mathrm{sup} \{ c>0 ; -cp+2cP \subset 2Q \ominus 2H \} \\
& = \mathrm{sup} \{ c>0 ; \forall p\in 2P^W,  -cp+2cP \subset 2Q \ominus 2H \} \\
& = \mathrm{sup} \{ c>0 ; 2cP + (-2cP^W) \subset 2Q \ominus 2H \}.\\
\intertext{Dividing by two yields}
& =\mathrm{sup} \{ c>0 ; c(P + (-P^W)) \subset Q \ominus H \}
\end{align*}
which is the expression in the statement of the Theorem.
\end{proof}

\begin{rem}
In the toric case, we recover our previous 
computation \cite{Del14}: 
\[
\alpha_{(\mathbb{S}^1)^n}(L)=\mathrm{sup} \{ c>0 ; c(P + (-P)) \subset Q \}.
\]
\end{rem}

\subsection{Semisimple case}

The alpha invariant of an ample line bundle on a Fano compactification
of a semisimple group can be easily expressed in terms of the polytope 
associated to $L$ as an inradius between two convex bodies.

\begin{defn}
The \emph{inradius} of $Q$ with respect to $P$ is the number:
\[
\mathrm{inr}(P,Q) := \mathrm{sup}\{c \geq 0| \exists x, ~ x+cP\subset Q\}.
\]
\end{defn}

\begin{cor}  
\label{alphassinr}
Assume that $G$ is a semisimple group. Then 
\[
\alpha_{K\times K}(L)=\mathrm{inr}(P,Q\ominus H).
\]
\end{cor}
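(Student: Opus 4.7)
The plan is to specialize the general formula of Theorem~\ref{alphared} to the semisimple case and identify the resulting expression with an inradius. The crucial input is that when $G$ is semisimple, the $W$-invariant subspace of $\mathfrak{a}^* = M\otimes \mathbb{R}$ is trivial: $W$-invariants in the character lattice $M$ correspond to characters of $G$, all of which are trivial for a semisimple group. Since $P$ is a nonempty $W$-invariant convex polytope, the barycenter of the $W$-orbit of any point of $P$ lies in $P$ by convexity and is $W$-invariant, hence equal to $0$. Therefore $0\in P$ and $P^W=\{0\}$, so $-P^W=\{0\}$ and the formula of Theorem~\ref{alphared} collapses to
\[
\alpha_{K\times K}(L) = \mathrm{sup}\{c>0 : cP \subset Q\ominus H\}.
\]

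It remains to identify this with the inradius $\mathrm{inr}(P,Q\ominus H)=\mathrm{sup}\{c\geq 0 : \exists x,\ x+cP\subset Q\ominus H\}$. The inequality $\alpha_{K\times K}(L)\leq \mathrm{inr}(P,Q\ominus H)$ is immediate by taking $x=0$. For the reverse direction, I would first note that $Q\ominus H$ is $W$-invariant, since $Q$ and $H$ are both $W$-invariant and the Minkowski difference of $W$-invariant convex bodies is $W$-invariant. Then, assuming $x+cP\subset Q\ominus H$, for any $p\in P$ and $w\in W$ the point $x+c(w^{-1}p)$ belongs to $Q\ominus H$ (since $w^{-1}p\in P$ by $W$-invariance of $P$), and applying $w$ gives $wx+cp\in Q\ominus H$. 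Averaging these finitely many points over $w\in W$ and using convexity of $Q\ominus H$ yields $\bar x+cp\in Q\ominus H$, where $\bar x=|W|^{-1}\sum_{w\in W} wx$ is $W$-invariant, hence zero by semisimplicity. Thus $cp\in Q\ominus H$ for every $p\in P$, i.e.\ $cP\subset Q\ominus H$, and the reverse inequality follows.

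There is no substantive obstacle here; the argument is a clean symmetrization resting entirely on the vanishing of $\mathfrak{a}^{*W}$ in the semisimple case. The same vanishing is precisely what makes the $-P^W$ term in the general formula of Theorem~\ref{alphared} disappear, which is why the semisimple case admits the particularly transparent geometric description as an inradius.
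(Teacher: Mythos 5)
Your proof is correct and follows essentially the same route as the paper: you specialize the general formula using $P^W=\{0\}$ in the semisimple case, and then prove the nontrivial inequality with the inradius by the same $W$-averaging argument (using $W$-invariance of $P$ and $Q\ominus H$, convexity, and the vanishing of $W$-invariant points of $\mathfrak{a}^*$). The only differences are presentational, e.g.\ you justify $P^W=\{0\}$ explicitly and write out the barycenter computation that the paper states in one line.
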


\begin{proof}
If $G$ is semisimple, we have $P^W=\{0\}$. In fact, the metric $h_0$ whose convex potential is 
the zero function satisfies 
\begin{align*}
\alpha_{K\times K}(L) & = \mathrm{lct}(h_0) \\
                     & = \mathrm{sup}\{c>0; cP \subset Q\ominus H\}.
\end{align*}
And this is equal to the inradius $\mathrm{inr}(P,Q\ominus H)$. 

Indeed, one inequality is trivial: $\mathrm{inr}(P,Q\ominus H)\geq \alpha_{K\times K}(L)$.
Conversely, assume $c\leq \mathrm{inr}(P,Q\ominus H)$, i.e. there exists 
an $x\in \mathfrak{a}^*$ such that 
\[
x+cP\subset Q \ominus H.
\]
Then since $P$ and $Q\ominus H$ are stable under $W$-action, we also have 
\[
\forall w\in W, ~ w\cdot x +cP\subset Q \ominus H.
\]
Convexity and the fact that the barycenter of the $W$-orbit of $x$ is $0$ imply 
that $cP\subset Q \ominus H$, so $c\leq  \alpha_{K\times K}(L)$.
We have thus proved the other inequality 
$\mathrm{inr}(P,Q\ominus H)\leq \alpha_{K\times K}(L)$.
\end{proof}

\begin{rem}
In the case of reductive groups, the alpha invariant is not an inradius, 
but we can bound it from above by an inradius:
\[
\alpha_{K\times K}(L) \leq \mathrm{inr}(P+(-P)^W,Q\ominus H).
\]
\end{rem}

\subsection{Additional symmetries}

If the polytopes $P$ and $Q$ admit additional common symmetries, then the 
value of the alpha invariant can be improved. Indeed, the symmetries 
of $Q$ translate to a finite subgroup $O$ of the automorphisms group of the variety $X$, 
and if $P$ is stable under these symmetries, then it is linearized by $O$.
We can thus consider the alpha invariant with respect to the bigger group generated
by $K\times K$ and $O$, that we denote $K_O$.

We then have, adapting the proof of Theorem~\ref{alphared}, 
\[
\alpha_{K_O}(L)=\mathrm{sup} \{ c>0 ; c(P + (-P^{\left<W,O\right>})) \subset Q \ominus H \}.
\]
In particular, if the only fixed point under $\left<W,O\right>$ is the origin, 
then just as in the semisimple case, we get 
\[
\alpha_{K_O}(L)=\mathrm{inr}(P,Q\ominus H).
\]

\subsection{Examples}

Let us compute the $\alpha$ invariant of the anticanonical line bundle for some
wonderful compactifications of semisimple groups. First remark that in this case 
we have $P=Q$ and can rewrite the expression of the invariant as :
\[
\alpha_{K\times K}(X,-K_X)=\mathrm{sup}\{c>0; H\subset (1-c)Q\},
\]
or, by $W$-invariance, 
\[
\alpha_{K\times K}(X,-K_X)=\mathrm{sup}\{c>0; 2\rho\in (1-c)Q^+\}.
\]

\begin{rem} 
It is interesting to notice that this quantity appeared in the determination of some 
volume asymptotics by Chambert-Loir and Tschinkel. 
If $\sigma$ denotes the quantity the authors compute in the examples of compactifications 
of semisimple groups \cite[Section 5.3]{CLT10}, 
we have $\sigma=1-\alpha_{K\times K}(X,-K_X)$ if the polytope considered is the anticanonical 
polytope of a Fano compactification. This is because their computation 
in this special case is equivalent to a computation of the log canonical threshold of a 
metric on the anticanonical line bundle with constant convex potential. 
\end{rem} 

For wonderful compactifications of semisimple adjoint groups, 
the polytope of the anticanonical line bundle $Q$ is determined 
by the root system. Indeed, recall that it is the convex hull 
of the images by $W$ of the weight 
$2\rho + \sum_{i=1}^r \alpha_i$
where the $\alpha_i$ are the simple roots of $\Phi^+$.

In particular, when $G= (\mathrm{PSL}^2(\mathbb{C}))^n$, 
for any $n\geq 1$, the simple roots are the same as the positive roots, 
so $Q=2H$.

\begin{cor}
Let $X$ be the wonderful compactification of $(\mathrm{PSL}_2(\mathbb{C}))^n$, then 
\[
\alpha_{K\times K}(-K_X)=\frac{1}{2}.
\]
\end{cor}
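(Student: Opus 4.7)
The plan is to apply the formula $\alpha_{K\times K}(X,-K_X)=\mathrm{sup}\{c>0; 2\rho\in (1-c)Q^+\}$ recalled just before the corollary, specialized to the root system of $G=(\mathrm{PSL}_2(\mathbb{C}))^n$. First I would unfold the root data: the root system is of type $A_1^n$, so the $n$ positive roots $\alpha_1,\ldots,\alpha_n$ (one per factor) coincide with the simple roots, and $2\rho=\sum_{i=1}^n\alpha_i$. Hence the distinguished vertex of $Q$ in the positive Weyl chamber is $2\rho+\sum_{i=1}^n\alpha_i=4\rho$, and by $W$-invariance $Q$ is the convex hull of the whole $W$-orbit of $4\rho$; equivalently, $Q=2H$.

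Next I would describe $Q$ and $Q^+$ explicitly in the basis $(\alpha_1,\ldots,\alpha_n)$. The Weyl group $W=(\mathbb{Z}/2\mathbb{Z})^n$ acts by independent sign changes on the coordinates, so the orbit of $4\rho$ is $\{2\sum_{i=1}^n\epsilon_i\alpha_i:\epsilon_i\in\{\pm 1\}\}$, and $Q$ is the hypercube $\{\sum c_i\alpha_i:|c_i|\leq 2\}$. Intersecting with $\mathfrak{a}^*_+=\{\sum c_i\alpha_i:c_i\geq 0\}$ gives $Q^+=\{\sum c_i\alpha_i:0\leq c_i\leq 2\}$. The containment $2\rho=\sum_i\alpha_i\in(1-c)Q^+$ then reduces coordinate by coordinate to $1\leq 2(1-c)$, which is equivalent to $c\leq 1/2$. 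Taking the supremum yields $\alpha_{K\times K}(-K_X)=1/2$.

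There is no serious technical obstacle here: once the general formula is in hand and the $A_1^n$ root data is unwound, the computation is entirely elementary. The only point worth highlighting is that the coincidence of simple and positive roots in type $A_1^n$ forces $Q=2H$, so the polytope is a scaled hypercube and the test $2\rho\in(1-c)Q^+$ collapses to a single coordinatewise inequality, which decouples across the $n$ factors just as one would expect from the product structure of $(\mathrm{PSL}_2(\mathbb{C}))^n$.
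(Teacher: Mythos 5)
Your proposal is correct and follows essentially the paper's route: both hinge on the observation that for type $A_1^n$ the simple and positive roots coincide, so $Q=2H$, and then apply the general alpha-invariant formula specialized to the anticanonical wonderful case. The only cosmetic difference is that you verify the criterion $2\rho\in(1-c)Q^+$ coordinatewise in the root basis, whereas the paper invokes Corollary~\ref{alphassinr} and reads off $\mathrm{inr}(2H,H)=\tfrac12$; the two computations are immediately equivalent.
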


\begin{proof}
Applying Corollary~\ref{alphassinr}
gives 
\[
\alpha_{K\times K}(-K_X)=\mathrm{inr}(2H,H)=\frac{1}{2}.
\]
\end{proof}

More generally for type $A_n$, choosing an appropriate ordering 
of the simple roots $\alpha_1, \ldots, \alpha_n$, we can write the positive roots
as 
\[
\alpha_i+\alpha_{i+1}+\cdots +\alpha_j
\] 
for each pair $(i,j)$ with 
$1\leq i \leq j \leq n$.
We see then that the coefficient of $\alpha_k$ in the sum of positive roots 
$\sum_{l=1}^n \alpha_l$ is equal to the cardinal of the set
$\{(i,j) ; 1\leq i \leq k \leq j\leq n\}$.
This is $k(n-k+1)$.
Adding the sum of simple roots, we see that the $k^{th}$-coordinate of the vertex defining
the polytope of the wonderful compactification of $\mathrm{PSL}_{n+1}(\mathbb{C})$ in the basis of 
simple roots is $1+k(n-k+1)$.

Then from our result, the alpha invariant is easily seen to be the maximum of all 
$c>0$ such that for each $k$, $c(1+k(n-k+1))\leq 1$.
We deduce the following value for the alpha invariant.

\begin{cor}
Let $X$ be the wonderful compactification of $\mathrm{PSL}_{n+1}(\mathbb{C})$, then 
\[
\alpha_{K\times K}(-K_X)=
\frac{1}{1+\lceil \frac{n}{2} \rceil (\lfloor \frac{n}{2} \rfloor +1) }.
\]
\end{cor}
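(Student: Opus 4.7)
The plan is to apply the reformulation recorded just above the statement,
\[
\alpha_{K\times K}(-K_X) = \mathrm{sup}\{c > 0 : 2\rho \in (1-c)Q^+\},
\]
combined with the explicit description of $Q$ as the convex hull of the $W$-orbit of the dominant weight $v := 2\rho + \sum_{i=1}^n \alpha_i$. Since $Q$ is a weight polytope, its dominant part $Q^+$ admits the classical description that a dominant $\mu$ lies in $Q$ if and only if $v - \mu$ is a non-negative real combination of the simple roots; I would apply this criterion to $\mu = 2\rho/(1-c)$.

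The bulk of the argument is then arithmetic. In the basis $(\alpha_1, \ldots, \alpha_n)$ one has $2\rho = \sum_{k=1}^n k(n-k+1)\alpha_k$, as was spelled out in the paragraph preceding the statement, and hence $v = \sum_{k=1}^n (1 + k(n-k+1))\alpha_k$. A direct subtraction gives
\[
v - \frac{2\rho}{1-c} = \sum_{k=1}^n \frac{1 - c(1 + k(n-k+1))}{1-c}\, \alpha_k,
\]
so for $0 < c < 1$ the membership condition reduces to the system of inequalities $c(1 + k(n-k+1)) \leq 1$ for every $k \in \{1,\ldots,n\}$.

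The concluding step is elementary optimisation: the supremum equals $1/(1 + M_n)$ where $M_n := \mathrm{max}_{1 \leq k \leq n} k(n-k+1)$. Since $k \mapsto k(n-k+1)$ is a downward parabola centred at $k = (n+1)/2$, its integer maximum over $\{1,\ldots,n\}$ is attained at $k = \lceil n/2 \rceil$ (and also at $\lfloor n/2 \rfloor + 1$ when $n$ is even, by symmetry), with value $\lceil n/2 \rceil(\lfloor n/2 \rfloor + 1)$. Substituting this back yields the stated formula.

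The only point requiring genuine care is the description of $Q^+$ used at the start: one must either invoke the standard fact that the dominant points of $\mathrm{Conv}(W\cdot v)$ coincide with $\{\mu \in \mathfrak{a}^*_+ : v - \mu \in \sum_i \mathbb{R}_{\geq 0}\alpha_i\}$, or give a direct argument specific to type $A_n$ using the dual basis of fundamental coweights. The remainder is routine calculation.
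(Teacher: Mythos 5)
Your proposal is correct and follows essentially the same route as the paper: the reformulation $\alpha_{K\times K}(-K_X)=\sup\{c>0: 2\rho\in(1-c)Q^+\}$, the coordinates $1+k(n-k+1)$ of the highest vertex in the simple-root basis, reduction to the inequalities $c(1+k(n-k+1))\leq 1$, and the elementary maximisation of $k(n-k+1)$. The only difference is that you make explicit the dominance-order characterisation of $Q^+=\mathrm{Conv}(W\cdot v)\cap\mathfrak{a}^*_+$, which the paper leaves implicit in its ``easily seen'' step, and that fact is indeed standard and correctly invoked.
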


\bibliographystyle{alpha}
\bibliography{biblio}

\def\cprime{$'$}
\begin{thebibliography}{DCP83}

\bibitem[AB04]{AB04II}
V.~Alexeev and M.~Brion.
\newblock Stable reductive varieties. {II}. {P}rojective case.
\newblock {\em Adv. Math.}, 184(2):380--408, 2004.

\bibitem[AL92]{AL92}
H.~Azad and J.-J. Loeb.
\newblock Plurisubharmonic functions and {K}\"ahlerian metrics on
  complexification of symmetric spaces.
\newblock {\em Indag. Math. (N.S.)}, 3(4):365--375, 1992.

\bibitem[Amb]{Amb14}
F.~Ambro.
\newblock Variation of log canonical thresholds in linear systems.
\newblock Arxiv : 1411.2770.

\bibitem[BK05]{BK05}
M.~Brion and S.~Kumar.
\newblock {\em Frobenius splitting methods in geometry and representation
  theory}, volume 231 of {\em Progress in Mathematics}.
\newblock Birkh\"auser Boston, Inc., Boston, MA, 2005.

\bibitem[Bri07]{Bri07}
M.~Brion.
\newblock The total coordinate ring of a wonderful variety.
\newblock {\em J. Algebra}, 313(1):61--99, 2007.

\bibitem[CLT10]{CLT10}
A.~Chambert-Loir and Y.~Tschinkel.
\newblock Igusa integrals and volume asymptotics in analytic and adelic
  geometry.
\newblock {\em Confluentes Math.}, 2(3):351--429, 2010.

\bibitem[DCP83]{DCP83}
C.~De~Concini and C.~Procesi.
\newblock Complete symmetric varieties.
\newblock In {\em Invariant theory ({M}ontecatini, 1982)}, volume 996 of {\em
  Lecture Notes in Math.}, pages 1--44. Springer, Berlin, 1983.

\bibitem[Dela]{DelKE}
T.~Delcroix.
\newblock Kähler-einstein metrics on group compactifications.
\newblock preprint.

\bibitem[Delb]{DelTh}
T.~Delcroix.
\newblock Métriques de kähler-einstein sur les compactifications de groupes.
\newblock PhD Thesis.

\bibitem[Del15]{Del14}
T.~Delcroix.
\newblock Alpha-invariant of toric line bundles.
\newblock {\em Ann. Pol. Math.}, 114:13--27, 2015.

\bibitem[Ful93]{Ful93}
W.~Fulton.
\newblock {\em Introduction to toric varieties}, volume 131 of {\em Annals of
  Mathematics Studies}.
\newblock Princeton University Press, Princeton, NJ, 1993.
\newblock The William H. Roever Lectures in Geometry.

\bibitem[Gue12]{Gue12}
H.~Guenancia.
\newblock Toric plurisubharmonic functions and analytic adjoint ideal sheaves.
\newblock {\em Math. Z.}, 271(3-4):1011--1035, 2012.

\bibitem[Kna02]{Kna02}
A.~W. Knapp.
\newblock {\em Lie groups beyond an introduction}, volume 140 of {\em Progress
  in Mathematics}.
\newblock Birkh\"auser Boston, Inc., Boston, MA, second edition, 2002.

\bibitem[LSY15]{LSY15}
H.~Li, Y.~Shi, and Y.~Yao.
\newblock A criterion for the properness of the {$K$}-energy in a general
  {K}\"ahler class.
\newblock {\em Math. Ann.}, 361(1-2):135--156, 2015.

\bibitem[Mai00]{Mai00}
V.~Maillot.
\newblock G\'eom\'etrie d'{A}rakelov des vari\'et\'es toriques et fibr\'es en
  droites int\'egrables.
\newblock {\em M\'em. Soc. Math. Fr. (N.S.)}, (80):vi+129, 2000.

\bibitem[Oda88]{Oda88}
T.~Oda.
\newblock {\em Convex bodies and algebraic geometry}, volume~15 of {\em
  Ergebnisse der Mathematik und ihrer Grenzgebiete (3) [Results in Mathematics
  and Related Areas (3)]}.
\newblock Springer-Verlag, Berlin, 1988.
\newblock An introduction to the theory of toric varieties, Translated from the
  Japanese.

\bibitem[Per14]{Per14}
N.~Perrin.
\newblock On the geometry of spherical varieties.
\newblock {\em Transform. Groups}, 19(1):171--223, 2014.

\bibitem[Roc97]{Roc97}
R.~T. Rockafellar.
\newblock {\em Convex analysis}.
\newblock Princeton Landmarks in Mathematics. Princeton University Press,
  Princeton, NJ, 1997.
\newblock Reprint of the 1970 original, Princeton Paperbacks.

\bibitem[Son05]{Son05}
J.~Song.
\newblock The {$\alpha$}-invariant on toric {F}ano manifolds.
\newblock {\em Amer. J. Math.}, 127(6):1247--1259, 2005.

\bibitem[Tia87]{Tia87}
G.~Tian.
\newblock On {K}\"ahler-{E}instein metrics on certain {K}\"ahler manifolds with
  {$C_1(M)>0$}.
\newblock {\em Invent. Math.}, 89(2):225--246, 1987.

\bibitem[Tim11]{Tim11}
D.~A. Timashev.
\newblock {\em Homogeneous spaces and equivariant embeddings}, volume 138 of
  {\em Encyclopaedia of Mathematical Sciences}.
\newblock Springer, Heidelberg, 2011.
\newblock Invariant Theory and Algebraic Transformation Groups, 8.

\end{thebibliography}

\end{document}